\title{A bounded jump for the bounded Turing degrees}
\author{Bernard A. Anderson\\ \small{Department of Mathematics}\\ \small{and Natural Sciences}\\ \small{Gordon College}\\ \small{banderson@gdn.edu}\\ \small{www.gdn.edu/Faculty/banderson} \and
Barbara F.~Csima \thanks{B.\ Csima was partially supported by Canadian NSERC
Discovery Grant 312501. B. Csima would like to thank the Max Planck Institute for Mathematics, Bonn Germany, for a productive visit.} \\ \small{Department of}\\ \small{Pure Mathematics}\\ \small{University of
Waterloo}\\
\small{csima@math.uwaterloo.ca}\\
\small{www.math.uwaterloo.ca/$\sim$csima}}
\newtheorem{theorem}{Theorem}[section]
\newtheorem{lemma}[theorem]{Lemma}
\newtheorem{cor}[theorem]{Corollary}
\newtheorem{prop}[theorem]{Proposition}
\newtheorem{remark}[theorem]{Remark}
\theoremstyle{definition}
\newtheorem{definition}{Definition}
\newcommand{\setsep}{\ensuremath{\, | \;}}
\newcommand{\strings}{2^{<\omega}}
\newcommand{\num}{\in\omega}
\newcommand{\conv}{\!\downarrow}
\newcommand{\dive}{\!\uparrow}
\newcommand{\concat}{\: \hat{\ } \:}
\newcommand{\rstrd}{\upharpoonright\!\!\upharpoonright}
\newcommand{\smrstrd}{\upharpoonright\!\!\upharpoonright}
\newcommand{\btl}{\leq_{bT}}
\newcommand{\es}{\emptyset}
\newcommand{\ce}{c.e.\ }
\newcommand{\la}{\langle}
\newcommand{\ra}{\rangle}
\newcommand{\Eres}{\upharpoonright\!\!\upharpoonright}
\newcommand{\dn}{\downarrow}
\newcommand{\up}{\uparrow}
\newcommand{\nin}{\not\in}
\newcommand{\rt}{\rightarrow}
\begin{document}
\maketitle
\begin{abstract} We define the bounded jump of $A$ by
$A^b = \{ x \num \setsep \exists i \leq x [\varphi_i (x) \conv
\ \wedge\ \Phi_x^{A \smrstrd \varphi_i(x)}(x)\conv ] \}$ and
let $A^{nb}$ denote the $n$-th bounded jump. We demonstrate
several properties of the bounded jump, including that it is
strictly increasing and order preserving on the bounded Turing
($bT$) degrees (also known as the weak truth-table degrees). We
show that the bounded jump is related to the Ershov hierarchy.
Indeed, for $n \geq 2$ we have $X \leq_{bT} \es^{nb} \iff X$ is
$\omega^n$-c.e.\ $\iff X \leq_1 \es^{nb}$, extending the
classical result that $X \leq_{bT} \es' \iff X$ is
$\omega$-c.e.  Finally, we prove that the analogue of
Shoenfield inversion holds for the bounded jump on the bounded
Turing degrees.  That is, for every $X$ such that $\es^b
\leq_{bT} X \leq_{bT} \es^{2b}$, there is a $Y \leq_{bT} \es^b$
such that $Y^b \equiv_{bT} X$.
\end{abstract}

\section{Introduction}

In computability theory, we are interested in comparing the
relative computational complexities of infinite sets of natural
numbers. There are many ways of doing this, and which method is
used often depends on the purpose of the study, or how fine a
comparison is desired.  Two sets of the same computational
complexity ($X \leq Y$ and $Y \leq X$) are said to be in the
same \emph{degree}.  The computable sets form the lowest degree
for all of the reducibilities we consider here.

Some of the most natural reducibilities are $m$-reducibility
and $1$-reducibility.  Recall that a set $A$ is $m$-reducible
($1$-reducible) to a set $B$ if there is a computable
(injective) function $f$ such that for all $x$, $x \in A$ iff
$f(x) \in B$. The major failing of these reducibilities is that
a set need not be reducible to its complement.

The most commonly studied reducibility is that of Turing
reducibility, where $A$ is Turing reducible to $B$ if there is
a program that, with reference to an infinite oracle tape
containing $B$, computes $A$. Though each computation of a
Turing reduction views only finitely much of the oracle tape,
there is no computable bound on how much of the tape can be
viewed in a computation.

Many natural Turing reductions have the property that the use
of the oracle is bounded by a computable function. We will
refer to such reductions as \emph{bounded Turing} reductions,
and write $A \leq_{bT} B$. This is also commonly known in the
literature as weak truth-table ($wtt$) reducibility.

A truth-table reduction is a pair of computable functions $f$
and $g$, such that, for each $x$, $f(x)$ supplies a finite list
$x_1,..., x_n$ of positions of the oracle, and $g(x)$ provides
a truth-table on $n$ variables (a map $2^n \rt 2$). A set $A$
is said to be truth-table reducible to $B$ if there is a
truth-table reduction $f, g$ such that, for every $x$, $x \in
A$ iff the row of the truth table $g$ obtained by viewing $B$
on the positions $x_1,...,x_n$ has value $1$. It is easy to see
that $A \leq_{tt} B$ iff $A$ is Turing reducible to $B$ via a
functional that is total on all oracles. Note that if a
functional is total on all oracles, then there is a computable
bound on the use for each input.  Bounded Turing reducibility is weaker than
$tt$-reducibility, and this is where the name ``weak
truth-table reducibility" originated. However, since the
weakening has nothing to do with the truth-table, we follow the
notation of $bT$, as used in Soare \cite{NewSoare} \cite{Soare}.

The halting set is the first natural example of a
non-computable set. The Turing jump operator works by
relativizing the halting set to other oracles. Basic properties
of the Turing jump include that it is strictly increasing with
respect to Turing reducibility, and that it maps a single
Turing degree into a single 1-degree. This later property shows
that the Turing jump is a well-defined operator on all of the
degree structures we have mentioned so far.

The strictly increasing property of the Turing jump implies
that the Turing jump of any set must compute the halting set.
There are a variety of ``jump inversion'' results, that show
that the range of the Turing jump is maximal (with respect to a
restricted domain). Friedberg jump inversion states that for
every $X \geq_T \es'$ there exists $A$ with $A' \equiv_T X
\equiv_T A \oplus \es'$. Shoenfield \cite{ShoenfieldInv}
demonstrated that for every $\Sigma_2$ set $X \geq_T
\es^\prime$ there is a set $Y \leq_T \es^\prime$ such that
$Y^\prime \equiv_T X$.

What about jump inversion for strong reducibilities? Mohrherr
\cite{Mohrherr} showed that for any $X \geq_{tt} \es'$, there
exists $A$ with $A' \equiv X$. Anderson \cite{andtt} showed
that the full analogue of Friedberg jump inversion holds: for
every $X \geq_{tt} \es'$, there exists $A$ with $A' \equiv_{tt}
X \equiv_{tt} A \oplus \es'$. Both Mohrherr's and Anderson's
proofs work with bT in place of tt. However, Csima, Downey, and
Ng \cite{CsimaDowneyNg} have proved that the analogue of
Shoenfield jump inversion fails to hold for the tt and bounded
Turing degrees. Indeed, they showed that there is a $\Sigma_2$
set $C
>_{tt} \es^\prime$ such that for every $D \leq_T \es^\prime$ we
have $D^\prime \not\equiv_{bT} C$. The proof exploits the fact
that the Turing jump is defined with respect to Turing (and not
bounded Turing) reducibilities.

Our goal for this paper was to develop a jump operator for the
bounded Turing degrees.  We wanted this jump to be bounded in
its use of the oracle, and to hold all of the properties
usually associated with a jump operator (in particular,
strictly increasing and order preserving).  In this paper, we
will define such a jump, examine its properties, and show it is
distinct from already used operators.  We will prove Shoenfield
inversion holds for the bounded Turing degrees with this jump.

The sets that are computable from the $n$-th Turing jump of
$\es$ have a very nice characterization -- they are exactly the
$\Delta^0_{n+1}$ sets. The $n$-th bounded jumps of $\es$ also
have a natural characterization. In this case, the connection is
with the Ershov hierarchy.  For $n \geq 2$, the sets that can be bT
computed from (indeed are tt or 1-below) the $n$-th iterate of the
bounded jump are exactly the $\omega^n$-c.e. sets.

There have been other jumps for strong reducibilities
introduced in the past, and we discuss some of these in Section
\ref{sec:other jump operators}. It has recently come to our
attention that Coles, Downey, and Laforte \cite{CDL} had
studied an operator similar to our bounded jump (defined as
$A^{b_1}$ in this paper), but unfortunately no written record
of their work exists, beyond a proof that their jump is
strictly increasing.

\section{Notation}

We mainly follow the standard notation for computability theory
as found in Cooper \cite{CooperBook} and Soare \cite{NewSoare}
\cite{Soare}. We let $\varphi_0, \varphi_1, \varphi_2, ... $
be an effective enumeration of the partial computable functions,
and let $\Phi_0, \Phi_1, \Phi_2,... $ be an effective enumeration
of the Turing functionals.  We assume our enumerations are acceptable.

We let $\es' = \{x \mid \varphi_x(x)\dn \}$, and for an
arbitrary set $A$, let $A' = \{ x \mid \Phi_x^A(x) \dn \}$. In
the case that the enumeration $\{\varphi_n\}_{n\in \omega}$ is
such that $\varphi_n = \Phi_n^{\es}$, then there is no
confusion with the two definitions of $\es'$. But under any
enumeration, the two definitions are $1$-equivalent.

For a set $A$, we let $A \Eres x = \{n \in A \mid n \leq x\}$.
We follow an expression with a stage number in brackets
(i.e.\ $[s]$) to indicate the stage number applies to everything
in the expression that is indexed by stage.

For sets $A$ and $B$ we write that $A \btl B$, and say $A$
is bounded Turing reducible to $B$, if there exist $i$ and $j$
such that $\varphi_j$ is total and for all $x$, $A(x) =
\Phi_i^{B\Eres \varphi_j (x)}(x)\dn$. This agrees with the
informal definition of bT given in the introduction.

\section{The bounded jump}

\begin{definition} Given a set $A$ we define the bounded jump
$$A^b = \{ x \num \setsep \exists i \leq x [\varphi_i (x)
\conv \ \wedge\ \Phi_x^{A \smrstrd \varphi_i(x)}(x)\conv ] \}$$ \end{definition}

We let $A^{nb}$ denote the $n$-th bounded jump.

\begin{remark} $\es^b \equiv_1 \es^\prime$ \end{remark}

This holds since bounding the use of an empty oracle has no effect.  We will use $\es^b$ and $\es^\prime$ interchangeably from now on.

We consider a more general definition of the bounded jump.

\begin{definition} $A^{b_0} = \{ \langle e, i, j \rangle \num \setsep \varphi_i (j) \conv \ \wedge\ \Phi_e^{A \smrstrd \varphi_i (j)} (j) \conv \}$ \end{definition}

We show that, up to truth table equivalence, $A^b$ and $A^{b_0}$ are the same.  We will at times identify one with the other.

\begin{remark} For any set $A$ we have $A^b \leq_{tt} A^{b_0}$. \end{remark}

This is true since $x \in A^b \iff \exists i \leq x [ \langle
x, i, x \rangle \in A^{b_0} ]$.

\begin{theorem} For any set $A$ we have $A^{b_0} \leq_1 A^b$. \end{theorem}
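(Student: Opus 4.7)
The plan is to produce, via the s-m-n theorem plus the padding lemma, a computable injection $f$ such that $\langle e, i, j \rangle \in A^{b_0}$ iff $f(\langle e,i,j\rangle) \in A^b$, for every $A$. For each triple $(e, i, j)$ I will arrange for $x := f(\langle e,i,j\rangle)$ to be an index of the Turing functional whose computation on any input $y$ and any oracle $X$ proceeds as follows: first, compute $\varphi_i(j)$; if it converges to some value $m$, then return $\Phi_e^{X \smrstrd m}(j)$. In brief, $\Phi_x^X(y) \simeq \Phi_e^{X \smrstrd \varphi_i(j)}(j)$. The key feature of this design is that, whenever the computation halts, its oracle use is automatically bounded by $\varphi_i(j)$.

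At the same time, by s-m-n let $g(i,j)$ be an index with $\varphi_{g(i,j)}(y) = \varphi_i(j)$ for every $y$, i.e.\ an index of the constant function at value $\varphi_i(j)$ (which is total iff $\varphi_i(j) \dn$). Using padding I choose $f$ so that (i) $f(\langle e,i,j\rangle)$ really does index the functional described above, (ii) $f(\langle e,i,j\rangle) \geq g(i,j)$, and (iii) $f$ is injective; all three demands can be met simultaneously by padding with, for instance, a sufficiently large block that encodes the triple $\langle e,i,j\rangle$.

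For the verification, suppose first that $\langle e,i,j\rangle \in A^{b_0}$, so $m := \varphi_i(j) \dn$ and $\Phi_e^{A \smrstrd m}(j) \dn$. Take $i' := g(i,j)$; by construction $i' \leq x$ and $\varphi_{i'}(x) = m$, while $\Phi_x^{A \smrstrd m}(x) \simeq \Phi_e^{(A \smrstrd m) \smrstrd m}(j) = \Phi_e^{A \smrstrd m}(j) \dn$, witnessing $x \in A^b$. Conversely, suppose $x \in A^b$ via some $i' \leq x$ with $\varphi_{i'}(x) = n \dn$ and $\Phi_x^{A \smrstrd n}(x) \dn$. The definition of $\Phi_x$ forces $\varphi_i(j) \dn$ to some value $m$ and $\Phi_e^{A \smrstrd \min(n,m)}(j) \dn$. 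If $n \geq m$ this already says $\Phi_e^{A \smrstrd m}(j) \dn$; if $n < m$, the same computation uses only positions $\leq n < m$, so extending the oracle to $A \smrstrd m$ yields the same halting computation. Either way $\Phi_e^{A \smrstrd m}(j) \dn$, and so $\langle e,i,j\rangle \in A^{b_0}$.

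The main obstacle I expect is precisely in this reverse direction: I do not control which $i'$ witnesses $x \in A^b$, and an arbitrary such $i'$ may supply a bound $n = \varphi_{i'}(x)$ with no a priori relationship to $\varphi_i(j)$. Building the truncation $X \smrstrd \varphi_i(j)$ directly into the functional $\Phi_x$ sidesteps this: any halting computation of $\Phi_x^A(x)$ has already confined its queries to positions $\leq \varphi_i(j)$, and the oracle-monotonicity of $\Phi_e$ then handles the sub-case $n < m$.
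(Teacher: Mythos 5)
Your proposal is correct and takes essentially the same approach as the paper: you build the truncation by $\varphi_i(j)$ directly into the new functional (the paper's $g$, your $f$), introduce a constant-valued index (the paper's $k(i,j)$, your $g(i,j)$) as the intended witness, and use the padding lemma to force $f(\langle e,i,j\rangle) \geq g(i,j)$, with the forward direction using that explicit witness and the backward direction relying on the built-in truncation to neutralize whichever bound $\varphi_{i'}(x)$ happens to supply the witness. The variable names differ but the construction and verification are the paper's.
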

\begin{proof} We define a computable and injective function $k$ by $\varphi_{k(i,j)} (x) = \varphi_i (j)$.  We then define the function $g$ (also computable and injective) by
$$\Phi^C_{g(\langle e, i, j \rangle )} (x) = \begin{cases} \Phi_e^{C \smrstrd \varphi_{k(i,j)}(x)} (j) & \; \varphi_i (j) \conv \\ \dive & \; \text{else} \end{cases}$$

By the padding lemma we may assume without loss of generality that for all $e$, $i$, and $j$ we have $g( \langle e, i, j \rangle ) \geq k (i,j)$.

We now show that $\langle e, i, j \rangle \in A^{b_0} \iff g(
\langle e, i, j \rangle ) \in A^b$.

For the forward direction, we use $k(i,j)$ as the witness that
$g(\langle e, i, j \rangle) \in A^b$.

\begin{align*}
& \langle e, i, j \rangle \in A^{b_0} \\
 \Rightarrow \ & \varphi_i (j) \conv \mbox{ and } \Phi_e^{A \smrstrd \varphi_i (j)} (j) \conv \\
 \Rightarrow \ & \mbox{for any $x$, } \varphi_{k(i,j)}(x) \conv \mbox{ and }
   \Phi_{g(\langle e, i, j \rangle)}^{A \smrstrd \varphi_{k(i,j)}(x)} (x) \conv
   \ \mbox{[by definitions of $g$ and $k$]} \\
 \Rightarrow \ & \varphi_{k(i,j)}(g(\langle e, i, j \rangle)) \conv \mbox{ and }
    \Phi_{g(\langle e, i, j \rangle)}^{A \smrstrd \varphi_{k(i,j)}(g(\langle e, i, j \rangle))}
    (g(\langle e, i, j \rangle)) \conv \ [\mbox{let } x = g(\langle e, i, j \rangle)] \\
 \Rightarrow \ &  \exists l \leq g(\langle e, i, j \rangle) \ [\, \varphi_l (g(\langle e, i, j \rangle))
     \conv \mbox{ and } \Phi_{g(\langle e, i, j \rangle)}^{A \smrstrd \varphi_l
     (g(\langle e, i, j \rangle))} (g(\langle e, i, j \rangle)) \conv \,]\ [\mbox{let } l = k(i,j)] \\
 \Rightarrow \ & g(\langle e, i, j \rangle) \in A^b
\end{align*}

For the backward direction, we ignore the witness $l$ that
$g(\langle e, i, j \rangle) \in A^b$, and rely on the
definition of $g$.

\begin{align*}
& g(\langle e, i, j \rangle) \in A^b\\
\Rightarrow \ & \exists l \leq g(\langle e, i, j \rangle) \ [\, \varphi_l
(g(\langle e, i, j \rangle)) \conv \mbox{ and } \Phi_{g(\langle
e, i, j \rangle)}^{A \smrstrd \varphi_l (g(\langle e, i, j
\rangle))} (g(\langle e, i, j \rangle)) \conv \,] \\
\Rightarrow \ & \exists l \leq g(\langle e, i, j \rangle) \ [\, \varphi_l (g(\langle e, i, j \rangle))
     \conv \mbox{ and } \varphi_i (j) \conv \mbox{ and } \Phi_e^{\big(A \smrstrd
     \varphi_l (g(\langle e, i, j \rangle))\big) \smrstrd \varphi_{k(i,j)}(g(\langle e, i, j \rangle))} (j)
      \conv \,] \\
 & \ \ \ \ \ \ \      \mbox{ [by definition of $g$]}\\
\Rightarrow \ & \exists l \leq g(\langle e, i, j \rangle) \ [\, \varphi_l (g(\langle e, i, j \rangle)) \conv
    \mbox{ and } \varphi_i (j) \conv \mbox{ and } \Phi_e^{A \smrstrd \min \big( \varphi_l
    (g(\langle e, i, j \rangle)), \varphi_i (j) \big)} (j) \conv \,]\\
\Rightarrow \ & \varphi_i (j) \conv \mbox{ and } \Phi_e^{A \smrstrd \varphi_i
(j)} (j) \conv \\
\Rightarrow \ & \langle e, i, j \rangle \in A^{b_0}
\end{align*}

\end{proof}

We see later in Remark \ref{notoneequiv} that we cannot strengthen this to $A^b \equiv_1 A^{b_0}$.

Another possibility is a more ``diagonal'' definition for the bounded jump.

\begin{definition} $A^{b_1} = \{x \setsep \varphi_x (x) \conv \ \wedge\ \Phi_x^{A \smrstrd \varphi_x (x)} (x) \conv \}$ \end{definition}

We view this definition as less desirable, since it depends
heavily on the particular enumeration $\{\varphi_x\}_{x \in
\omega}$ of the partial computable functions. Indeed, depending
on the enumeration, one could have $A^{b_1} = \es'$ for all
sets $A$, or with a different enumeration, $A^{b_1} \equiv_1
A^{b_0}$.

%
%
%
%
%
%
%
%

Finally, we might also consider a simpler bounded jump.

\begin{definition} $A^i = \{ x \num \setsep \Phi_x^{A \smrstrd x} (x) \conv \}$ \end{definition}

However, this definition seems unsatisfactory since it is not strictly increasing.

\begin{remark} Let $A$ be a set with $A \geq_{bT} \es^\prime$.  Then $A \geq_{bT} A^i$. \end{remark}
\begin{proof} We show that $A^i \leq_{bT} A \oplus \es^\prime$ for any $A$.  Let $f(n)$ denote the maximum over all strings $\sigma$ of length $n$, of the location of $\es^\prime$ needed to determine if $\Phi_n^\sigma (n) \conv$.  Then $\es^\prime \rstrd f(n)$ and $A \rstrd n$ suffice to compute $A^i (n)$.  \end{proof}

\section{Properties} \label{Properties}
We summarize some facts about the bounded jump.  Let $A$ be any set.

\begin{enumerate}
\item{$\es^b \equiv_1 \es^\prime$.}
\item{$A \leq_1 A^b$.}
\item{$A^b \leq_1 A^\prime$ (since $A^b$ is \ce in $A$).}
\item{$\es^\prime \leq_{tt} A^b$ (as a consequence of Corollary \ref{ordprev} below).}
\item{$A^b \equiv_T A \oplus \es^\prime$ (by Proposition \ref{Turchar} below).}
\item{If $A \geq_T \es^\prime$ then $A^b \leq_T A$.}
\item{Let $A$ be such that $A^\prime \not \leq_T A \oplus \es^\prime$ (e.g.\ any $A \geq_T \es^\prime$).  Then $A^\prime \not \leq_T A^b$ (so $A^\prime \not \leq_{bT} A^b$).}
\item{$A^b \not \leq_{bT} A$ (by Theorem \ref{strinc} below).}
\item{If $A \geq_{bT} \es^\prime$ then $A^b \not \leq_{bT} A \oplus \es^\prime$.}
\end{enumerate}

The effect of the bounded jump on the Turing degrees is easy to characterize.

\begin{prop} Let $A$ be any set.  Then $A^b \leq_T A \oplus \es^\prime$. \label{Turchar} \end{prop}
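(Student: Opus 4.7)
The plan is to describe, given $x$, a finite algorithm using oracle $A \oplus \es'$ that correctly decides membership of $x$ in $A^b$. Since membership of $x$ in $A^b$ is witnessed by some $i \le x$, there are only finitely many witnesses to check, so the algorithm will loop over $i = 0, 1, \dots, x$ and declare $x \in A^b$ iff some $i$ in this range witnesses membership.

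For each fixed $i \le x$, there are two conditions to verify: (a) $\varphi_i(x)\dn$, and (b) $\Phi_x^{A \Eres \varphi_i(x)}(x)\dn$. The point is that both are $\Sigma^0_1$ questions, and hence decidable from $\es'$ uniformly. For (a), membership in $\{\langle i, x\rangle : \varphi_i(x)\dn\}$ is the halting problem, so $\es'$ answers it directly. Assuming (a) succeeds, the value $u := \varphi_i(x)$ can then be computed by simply simulating $\varphi_i(x)$ until it halts (safe to do now that we know it halts; alternatively, use $\es'$ to locate the halting stage). Using the $A$-side of the oracle, read off the finite string $A \Eres u$; this only requires querying $A$ at finitely many positions $\le u$. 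Finally, for (b), the question ``$\Phi_x^{A \Eres u}(x)\dn$'' is again $\Sigma^0_1$ in the index $\la x, A \Eres u\ra$ and so is decided by $\es'$.

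Putting this together, the procedure uses $\es'$ to filter the finitely many candidate witnesses $i \le x$ and uses $A$ only to read an initial segment determined by the converging $\varphi_i(x)$ values. Outputting $1$ if some $i \le x$ passes both checks and $0$ otherwise gives a correct Turing reduction of $A^b$ to $A \oplus \es'$.

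There is no genuine obstacle here; the only subtlety worth flagging is that the use of $A$ is \emph{not} computably bounded in $x$ (it depends on the values $\varphi_i(x)$, which may be arbitrarily large), so this argument yields only $\le_T$, not $\le_{bT}$ or $\le_{tt}$. That is consistent with Property (8) of Section~\ref{Properties}, which asserts that $A^b \not\le_{bT} A$ in general.
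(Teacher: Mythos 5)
Your proof is correct and follows essentially the same approach as the paper: bound the existential over $i \le x$, use $\es'$ to decide halting of $\varphi_i(x)$ and of $\Phi_x^{\sigma}(x)$ for the finite string $\sigma = A \Eres \varphi_i(x)$ extracted from the $A$-oracle. The closing remark about why this is only a Turing reduction is a nice, accurate observation, consistent with the paper.
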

\begin{proof} We wish to determine if a given $n$ is such that $\exists i \leq n \, [\varphi_i (n) \conv \ \wedge\\ \Phi_n^{A \smrstrd \varphi_i(n)}(n)\conv ]$.  We note the existential quantifier is bounded.  Given $i \leq n$, we use $\es^\prime$ to determine if $\varphi_i (n) \conv$.  If it does we then get $\sigma = A \rstrd \varphi_i (n)$ from $A$ and use $\es^\prime$ to determine if $\Phi_n^\sigma (n) \conv$.  This does not require $A^\prime$ since the use of $A$ is bounded.  We can then determine if $n \in A^b$.
\end{proof}

While the bounded jump is not very interesting from the perspective of the Turing degrees, we hope to show it follows our intuition for a jump on the bounded Turing degrees.

We start by showing that the bounded jump is strictly increasing.  The proof is a diagonalization argument using the Recursion Theorem.

\begin{theorem} Let $A$ be any set.  Then $A^b \not \leq_{bT} A$. \label{strinc} \end{theorem}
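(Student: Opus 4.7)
The plan is to argue by contradiction. Assume $A^b \btl A$ via a Turing functional $\Phi_e$ with use bounded by a total computable function $\varphi_j$, so that $A^b(n) = \Phi_e^{A \smrstrd \varphi_j(n)}(n) \conv$ for every $n$. The goal is to manufacture a single $n$ on which this reduction contradicts the definition of $A^b$.

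By the s-m-n theorem, first define a Turing functional $\Psi$ as follows: on input $x$ with oracle $X$, $\Psi$ computes $\varphi_j(x)$ and then simulates $\Phi_e^X(x)$, treating any oracle query at position $\geq \varphi_j(x)$ as causing divergence; set $\Psi^X(x) \conv$ exactly when $\Phi_e^X(x) \conv = 0$ with oracle use at most $\varphi_j(x)$. Thus $\Psi$ is a Turing functional with use bounded by $\varphi_j$. Using the padding lemma (or equivalently the Recursion Theorem) pick an index $n$ for $\Psi$ with $n \geq j$, so that $\Phi_n = \Psi$ and the index $i = j$ is an eligible witness for membership of $n$ in $A^b$.

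Now split on the value of $A^b(n)$. If $n \nin A^b$, then the reduction gives $\Phi_e^{A \smrstrd \varphi_j(n)}(n) \conv = 0$, so $\Phi_n^{A \smrstrd \varphi_j(n)}(n) = \Psi^{A \smrstrd \varphi_j(n)}(n) \conv$; taking $i = j$, this forces $n \in A^b$, a contradiction. If instead $n \in A^b$ with some witness $i \leq n$, then $\varphi_i(n) \conv$ and $\Psi^{A \smrstrd \varphi_i(n)}(n) \conv$, which by construction means $\Phi_e^{A \smrstrd \varphi_i(n)}(n) \conv = 0$ with oracle use at most $\min(\varphi_i(n), \varphi_j(n))$. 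Monotonicity of Turing functionals then extends this to $\Phi_e^{A \smrstrd \varphi_j(n)}(n) = 0$, so the reduction gives $n \nin A^b$, again a contradiction.

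The main obstacle I anticipate is the bounded existential quantifier in the definition of $A^b$: the witness $i \leq n$ for membership need not have $\varphi_i(n)$ equal to the reduction's use $\varphi_j(n)$. The functional $\Psi$ is built precisely to neutralize this, since it caps its effective oracle use at $\varphi_j$ and monotonicity of Turing functionals then transports a successful $\Phi_e$-computation at one oracle length to the same value at length $\varphi_j(n)$. Padding to arrange $n \geq j$ is required only to make $i = j$ a legal candidate witness in the first half of the case split.
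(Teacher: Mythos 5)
Your argument is essentially correct, and it takes a route that is genuinely different in flavor from the paper's. The paper also argues by contradiction and uses the Recursion Theorem (or padding) to pin down a diagonal index $m$ that exceeds the index $k$ of the use-bound $g$, and also splits on whether the purported reduction claims $m \in A^b$. But the auxiliary functional is different in spirit. The paper defines $\Phi_{f(e)}^C(e)$ to output $\Phi_e^C(e)+1$ when $\Gamma^C(e)=1$: the self-referential ``$+1$'' makes the case $\Gamma^A(m)=1$ (i.e.\ the reduction asserting $m \in A^b$) collapse into the fixed-point contradiction $\Phi_m^A(m)=\Phi_m^A(m)+1$, and the case $\Gamma^A(m)=0$ then forces $\Phi_{f(m)}^{A\Eres g(m)}(m)$ both to converge (by definition of $f$) and to diverge (since $m\notin A^b$). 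Your $\Psi$ instead simulates the purported reduction $\Phi_e$ directly with its use explicitly capped at $\varphi_j(x)$ and halts exactly when $\Phi_e$ returns $0$ within that use. That makes the role of the use principle completely explicit and avoids the ``$+1$'' trick; in exchange, your Case 2 needs the extra (standard but worth saying aloud) observation that a converging computation with oracle $A\Eres\varphi_i(n)$ has use bounded by $\varphi_i(n)$ as well, so the use is bounded by $\min(\varphi_i(n),\varphi_j(n))$ and the computation lifts by monotonicity to the oracle $A\Eres\varphi_j(n)$, contradicting $A^b(n)=1$. This is exactly the place where the bounded existential quantifier in the definition of $A^b$ threatens to cause trouble, and you neutralize it correctly. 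Both proofs are roughly the same length and both use essentially the same amount of machinery; yours is arguably more transparent about why the bounded use matters.

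Two small points to tidy. First, there is an off-by-one: the paper defines $A\Eres x = \{k\in A \mid k\leq x\}$, so the reduction's oracle $A\Eres\varphi_j(n)$ is defined on positions $0,\ldots,\varphi_j(n)$ inclusive, and $\Phi_e$ is allowed to query position $\varphi_j(n)$ itself. Your $\Psi$ treats any query at a position $\geq\varphi_j(x)$ as divergent, which would cut off a legitimate query at position $\varphi_j(n)$; change the cap to ``$>\varphi_j(x)$'' so that Case 1 actually goes through. Second, padding and the Recursion Theorem are not literally equivalent, but either works here since $\Psi$ does not reference its own index; padding (or the $s$-$m$-$n$ theorem) suffices, and it is slightly cleaner to say so.
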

\begin{proof} Suppose not.  Let $\Gamma$ and $g$ witness $A^b \leq_{bT} A$.  We define a computable function $f$ by
$$\Phi_{f(e)}^C (x) = \begin{cases} 0 & \; x \not= e \text{ or } (x = e \text{ and } \Gamma^C (e) = 0)\\ \Phi_e^C (e) + 1 & \; x = e \text{ and } \Gamma^C (e) = 1\\ \dive & \; x = e \text{ and } \Gamma^C (e) \dive \end{cases}$$

By the Recursion Theorem, let $M$ be an infinite computable set such that for all $m \in M$ we have $\Phi^C_m = \Phi^C_{f(m)}$.  Let $k$ be such that $g = \varphi_k$ and pick $m \in M$ such that $m > k$.  We note $\Gamma^A$ is total so $\Phi^A_{f(m)}$ is total and thus $\Phi^A_m$ is total.

Suppose $\Gamma^A(m) = 1$.  Then $\Phi^A_m (m) = \Phi^A_{f(m)} (m) = \Phi^A_m (m) + 1$ for a contradiction.

Hence $\Gamma^A(m) = 0$.  Thus $m \notin A^b$.  So for all $i \leq m$ with $\varphi_i (m) \conv$ we have $\Phi_m^{A \smrstrd \varphi_i (m)} (m) \dive$.  In particular, since $k < m$ and $\varphi_k(m) = g(m) \conv$ we have $\Phi_m^{A \smrstrd g(m)} (m) \dive$.  Thus $\Phi_{f(m)}^{A \smrstrd g(m)} (m) \dive$ so $\Gamma^{A \smrstrd g(m)} (m) \dive$.  This contradicts our choice of $\Gamma$ and $g$.

We conclude $A^b \not \leq_{bT} A$.
\end{proof}

We next show the bounded jump is order-preserving on the bounded Turing degrees.  The proof is a careful application of the $s$-$m$-$n$ Theorem.

\begin{theorem} Let $A$ and $B$ be sets with $A \leq_{bT} B$.  Then $A^{b_0} \leq_1 B^{b_0}$. \end{theorem}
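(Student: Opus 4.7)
The plan is to produce an injective computable function $F$ such that $\langle e,i,j\rangle \in A^{b_0}$ iff $F(\langle e,i,j\rangle) \in B^{b_0}$. Since $A \leq_{bT} B$, fix a functional $\Psi$ and a total computable function $h = \varphi_k$ such that $A(n) = \Psi^{B \Eres h(n)}(n)$ for all $n$. The key observation is that to recover $A \Eres m$ we only need $B \Eres H(m)$, where $H(m) = \max_{n \leq m} h(n)$ is a computable, nondecreasing function.

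I would set $F(\langle e,i,j\rangle) = \langle e', i', j\rangle$, where $e'$ and $i'$ are produced uniformly from $e,i,j$ by the $s$-$m$-$n$ theorem as follows. First, choose $i'$ so that
\[
\varphi_{i'}(j) \simeq H(\varphi_i(j)),
\]
i.e.\ $\varphi_{i'}(j)$ computes $\varphi_i(j)$ first and, if it converges, outputs $\max_{n \leq \varphi_i(j)} h(n)$; otherwise it diverges. Thus $\varphi_{i'}(j)\conv$ iff $\varphi_i(j)\conv$. Second, choose $e'$ so that $\Phi_{e'}^C(j)$ first computes $\varphi_i(j)$, and if it converges simulates $\Phi_e^{\tilde A}(j)$, where $\tilde A(n)$, for each $n \leq \varphi_i(j)$, is obtained by running $\Psi^{C \Eres h(n)}(n)$.

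The verification splits into two directions. If $\langle e,i,j\rangle \in A^{b_0}$, then $\varphi_i(j)\conv$, so $\varphi_{i'}(j)\conv = H(\varphi_i(j))$; and since $h(n) \leq H(\varphi_i(j))$ for each $n \leq \varphi_i(j)$, we have $B \Eres \varphi_{i'}(j)$ correctly computes $A \Eres \varphi_i(j)$ via $\Psi$, so $\Phi_{e'}^{B \Eres \varphi_{i'}(j)}(j)\conv$ and $\langle e',i',j\rangle \in B^{b_0}$. Conversely, if $\langle e',i',j\rangle \in B^{b_0}$ then by definition of $i'$ we get $\varphi_i(j)\conv$, the simulated $\tilde A$ agrees with $A$ on all queried positions (all of which are $\leq \varphi_i(j)$, hence have use $\leq H(\varphi_i(j)) = \varphi_{i'}(j)$), and $\Phi_e^{A \Eres \varphi_i(j)}(j)\conv$, giving $\langle e,i,j\rangle \in A^{b_0}$.

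For injectivity, note that $F$ preserves the third coordinate $j$, and the $s$-$m$-$n$ construction for $e'$ and $i'$ can be made injective in $\langle e,i,j\rangle$ by the padding lemma (as was done in the preceding theorem in the excerpt). The main subtlety — and the step most worth being careful about — is the conversion of the $A$-use bound $\varphi_i(j)$ into the $B$-use bound $\varphi_{i'}(j) = H(\varphi_i(j))$, because this is the only place where the $bT$ (rather than merely Turing) hypothesis on $A \leq B$ is actually used; everything else is a routine $s$-$m$-$n$ packaging. Once this bound conversion is set up correctly the two $b_0$-membership conditions match exactly.
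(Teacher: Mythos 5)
Your proof is correct and follows essentially the same $s$-$m$-$n$ strategy as the paper: construct a functional that simulates $\Phi_e$ on a copy of $A$ recovered from the $B$-oracle via $\Psi$, and translate the $A$-use bound $\varphi_i(j)$ into a $B$-use bound carried by a new partial computable index. The only difference is cosmetic: you explicitly pass to the monotone majorant $H(m)=\max_{n\le m}h(n)$, whereas the paper sets $\varphi_{h(i)}(x)=f(\varphi_i(x))$ and implicitly uses the standard WLOG that the $bT$-use bound is nondecreasing.
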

\begin{proof} Let $\Psi$ and $f$ witness $A \leq_{bT} B$\@.  By the $s$-$m$-$n$ Theorem, let $h$ be a strictly increasing computable function such that $\varphi_{h(i)} (x) = f(\varphi_i (x))$.  Since $f$ is total, $\varphi_{h(i)} (x) \conv \iff \varphi_i (x) \conv$.

We define a computable, injective function $g$ by
$$\Phi^C_{g(\langle e, k, j \rangle)}(x) = \begin{cases} \Phi_e^{(\Psi^{C \smrstrd \varphi_{h(i)} (j)})\smrstrd \varphi_i (j)} (j) & \; k = h(i) \text{ for some $i$ and } \varphi_i (j) \conv \\ \dive & \; \text{else} \end{cases}$$

We now note:

\begin{align*}
\langle e, i, j \rangle \in A^{b_0}
\iff \ & \varphi_i (j) \conv \mbox{ and } \Phi_e^{A \smrstrd \varphi_i (j)} (j) \conv \\
\iff \ & \varphi_{h(i)} (j) \conv \mbox{ and } \Phi_e^{A \smrstrd \varphi_i (j)} (j) \conv \\
\iff\ & \varphi_{h(i)} (j) \conv \mbox{ and }  \Phi_e^{(\Psi^{B \smrstrd \varphi_{h(i)} (j)})
    \smrstrd \varphi_i (j)} (j) \conv \\
\iff \ & \varphi_{h(i)} (j) \conv \mbox{ and } \Phi_{g(\langle e, h(i),
j \rangle)}^{B \smrstrd \varphi_{h(i)}(j)} (j) \conv \\
\iff \ & \langle g(\langle e, h(i), j \rangle), h(i), j
\rangle \in B^{b_0}
\end{align*}

Therefore $A^{b_0} \leq_1 B^{b_0}$.
\end{proof}
Since for any set $X$ we have $X^{b_0} \equiv_{tt} X^b$, we
immediately obtain the following corollary.
\begin{cor} Let $A$ and $B$ be sets with $A \leq_{bT} B$.  Then $A^b \leq_{tt} B^b$. \label{ordprev} \end{cor}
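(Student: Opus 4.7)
The plan is to chain together the three reducibilities that have already been established in the paper, so no new construction is needed. Specifically, from the remark preceding the generalized definition we have $A^b \leq_{tt} A^{b_0}$, from the theorem that follows it we have $A^{b_0} \leq_1 A^b$ for every set (and hence $A^{b_0} \equiv_{tt} A^b$ by the usual fact that $\leq_1$ implies $\leq_{tt}$), and from the theorem we have just proved, $A \leq_{bT} B$ yields $A^{b_0} \leq_1 B^{b_0}$.

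Concretely, I would write
\[
A^b \ \leq_{tt}\ A^{b_0} \ \leq_1\ B^{b_0} \ \leq_{tt}\ B^b,
\]
invoking the first remark, the preceding theorem, and the other direction of the $A^{b_0} \equiv_{tt} A^b$ equivalence respectively. Transitivity of $\leq_{tt}$ (together with the fact that $\leq_1 \subseteq \leq_{tt}$, obtained by using the trivial one-variable truth table) then gives $A^b \leq_{tt} B^b$, which is the desired conclusion.

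There is essentially no obstacle here: all three links in the chain are stated and proved above, and each uses only the standard fact that truth-table reducibility is transitive and subsumes one-reducibility. The only small sanity check is confirming that $\leq_1$ composes with $\leq_{tt}$ on both sides to yield $\leq_{tt}$, which is immediate because a $1$-reduction $f$ can be composed into the index function of a truth-table reduction without affecting totality or the outputs of the attached truth tables. Hence the corollary follows in one line, exactly as the paper suggests by saying ``we immediately obtain the following corollary.''
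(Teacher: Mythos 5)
Your argument is correct and is exactly the route the paper takes: combine $A^b \equiv_{tt} A^{b_0}$ (from the earlier remark and theorem), the order-preservation theorem $A^{b_0} \leq_1 B^{b_0}$, and $B^{b_0} \equiv_{tt} B^b$, then apply transitivity of $\leq_{tt}$. Nothing to add.
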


We would also like to show that $A^b$ is not equivalent to $A \oplus \es^\prime$ for
the bounded Turing degrees.  We noted earlier that this holds
on the cone above $\es^\prime$.  We can also demonstrate this
holds elsewhere.  We recall two notions of sets being
``ordinary'' (see Nies \cite{Nies} for more information on
randomness).

\begin{definition} $X$ is $n$-generic if for every $\Sigma_n$ set $S \subseteq \strings$ either $X$ meets $S$ (i.e.\ there is an initial segment of $X$ in $S$) or there is an $l \num$ such that every string $\sigma$ extending $X \rstrd l$ is such that $\sigma \notin S$.  \end{definition}
\begin{definition} $X$ is $n$-random if for every uniformly $\Sigma_n$ family of sets
$\langle U_i \subseteq \strings \setsep i \num \rangle$ such
that $\mu (U_i) \leq 2^{-i}$ for all $i$, there exists an $l$
such that $X$ does not meet $U_l$.  \end{definition}

We show that if $A$ is 3-generic or 4-random then $A^b \not
\leq_{bT} A \oplus \es^\prime$.  In the proof, we will assume
towards a contradiction that $\Psi$ and $f$ witness $A^{b_0}
\leq_{bT} A \oplus \es^\prime$.  We will then use the Recursion
Theorem to find a computable set whose elements $n$ are such
that $f(n)+1 \in A  \iff n \in A^{b_0}$. Since $A \rstrd f(n)$
computes $A^{b_0} (n)$, we have $A (f(n) + 1)$ predicted by $A
\rstrd f(n)$.  This regularity property can then be used to
show $A$ is not 3-generic or 4-random, for a contradiction.

\begin{theorem}\label{thm_3generic} Let $A$ be 3-generic.  Then $A^b \not \leq_{bT} A \oplus \es^\prime$. \end{theorem}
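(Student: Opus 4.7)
The plan is to assume toward contradiction that $\Psi$ and a total computable use $f$ witness $A^b \leq_{bT} A \oplus \es'$, run a Recursion Theorem construction that turns this reduction into an $\es'$-predictor for specific bits of $A$, and then defeat the predictor using $3$-genericity.

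\emph{Self-referential coding.} As in the proof of Theorem~\ref{strinc}, a Recursion-Theorem-plus-padding argument produces an infinite computable set $M$ and a single auxiliary index $i^\star$ with $\varphi_{i^\star}(x) = f(x)+1$, such that for every $n \in M$ with $n \geq i^\star$,
$$\Phi_n^\sigma(n) \conv \iff |\sigma| > f(n)+1 \text{ and } \sigma(f(n)+1) = 1.$$
If $A(f(n)+1) = 1$, then $i^\star \leq n$ witnesses the existential in the definition of $A^b$; if $A(f(n)+1) = 0$, no $i \leq n$ can witness convergence. Hence $n \in A^b \iff A(f(n)+1) = 1$, and by the assumed reduction,
$$A(f(n)+1) = \Psi^{(A \oplus \es') \Eres f(n)}(n) \qquad \text{for cofinitely many } n \in M.$$

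\emph{Bad set and its complexity.} Define
$$S = \{\sigma \in \strings : \exists n \in M,\, n \geq i^\star,\, |\sigma| > f(n)+1,\, \Psi^{(\sigma \oplus \es') \Eres f(n)}(n) \conv \neq \sigma(f(n)+1)\}.$$
The clause ``$\Psi^{(\sigma \oplus \es') \Eres f(n)}(n) \conv = v$'' unfolds to $\exists s [\Psi^{(\sigma \oplus \es'_s) \Eres f(n)}_s(n) \conv = v \text{ and } \es'_s \Eres f(n) = \es' \Eres f(n)]$, with the final equality $\Pi_1$ (no element of $\{0, \ldots, f(n)\} \setminus \es'_s$ is ever enumerated into $\es'$). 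Hence $S \in \Sigma_2 \subseteq \Sigma_3$.

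\emph{Applying 3-genericity.} Either $A$ meets $S$ or some $A \Eres l$ has no extension in $S$. In the first case, an initial segment $A \Eres L \in S$ supplies an $n \in M$ with $\Psi^{(A \oplus \es') \Eres f(n)}(n) \neq A(f(n)+1)$, contradicting the predictor. In the second case, choose $n \in M$ with $n \geq i^\star$ and $f(n) \geq l$, and let $\tau$ be the length-$(f(n)+2)$ string that agrees with $A$ on positions $0, \ldots, f(n)$ and satisfies $\tau(f(n)+1) = 1 - A(f(n)+1)$. Then $\tau$ extends $A \Eres l$, and because $\Psi$'s use stays within $f(n)$,
$\Psi^{(\tau \oplus \es') \Eres f(n)}(n) = \Psi^{(A \oplus \es') \Eres f(n)}(n) = A(f(n)+1) \neq \tau(f(n)+1)$,
so $\tau \in S$ — contradiction.

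\emph{Main obstacle.} The delicate step is the first: arranging the Recursion Theorem to deliver both an infinite computable supply of indices $n \in M$ coding the query to position $f(n)+1$ and an auxiliary witness $i^\star$ bounded by every member of $M$. Padding makes this routine, but it must be spelled out. Once that is in place, the $\Sigma_2$-complexity calculation for $S$ — the use bound $f(n)$ is precisely what converts the $\Pi_2$ condition ``$\es' \Eres f(n) = \rho$'' into a $\Pi_1$ one — and the $3$-generic argument itself are essentially routine.
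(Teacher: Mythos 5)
Your proposal is correct and follows the same strategy as the paper: use the Recursion Theorem to engineer an infinite computable set of indices $n$ on which $A^b(n)$ equals the single bit $A(f(n)+1)$, turning $\Psi$ into a bit-predictor for $A$ that reads only an initial segment not containing that bit, and then defeat the predictor with genericity. Two small differences are worth noting. First, you work with $A^b$ directly, which forces you to manufacture the auxiliary index $i^\star$ and restrict to $n \geq i^\star$ so that $i^\star$ is an eligible witness; the paper instead passes to the tt-equivalent $A^{b_0}$, where the witness appears as a coordinate of the triple and no such size constraint arises. Second, your $S$ omits the ``or diverges'' clause, so it is $\Sigma_2$ rather than the paper's $\Sigma_3$. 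This is legitimate because in the avoiding case you do not try to extend an arbitrary string into $S$: you build $\tau$ to agree with $A$ on positions $0,\ldots,f(n)$, so $\Psi^{(\tau\oplus\es')\Eres f(n)}(n)$ is the true computation $\Psi^{(A\oplus\es')\Eres f(n)}(n)$, which converges by hypothesis, and the disagreement with $\tau(f(n)+1)=1-A(f(n)+1)$ is automatic. (As stated, your argument would actually give the conclusion for $2$-generics, though you invoke $3$-genericity as in the theorem.) One point both your write-up and the paper pass over: ``choose $n\in M$ with $f(n)\geq l$'' needs $f$ to be unbounded on the relevant infinite set of indices, which is not automatic; the standard fix is to normalize $f$ to be strictly increasing before the construction, and you should say so explicitly.
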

\begin{proof} Suppose not.  Then $A^{b_0} \leq_{bT} A \oplus \es^\prime$.  Let $\Psi$ and $f$ be such that \\$\Psi^{A \smrstrd f(n) \oplus \es^\prime \smrstrd f(n)} (n) = A^{b_0} (n)$ for all $n$.

For any $e$, let $\varphi_{g_e(i)}(j) = f(\langle e, i, j
\rangle) + 1$. By the Recursion Theorem, let $Z_e$ be an
infinite computable set such that for all $m \in Z_e$ we have
$\varphi_{g_e(m)} = \varphi_m$.  We choose $g_e$ and $Z_e$ such that they are uniformly computable.  Let $t$ be a computable
function such that for all $e$ we have $t(e) \in Z_e$. Then for
all $e, j \in \omega$,
$\varphi_{t(e)}(j)=\varphi_{g_e(t(e))}(j) = f( \la e, t(e), j
\ra) +1$. In particular, $\varphi_{t(e)}$ is total.

We define a computable function $h$ by
$$\Phi_{h(e)}^C (j) = \begin{cases} 1 & \; C(\varphi_{t(e)} (j)) = 1\\ \dive & \; \text{else} \end{cases}$$

By the Recursion Theorem, let $H$ be an infinite computable set
such that for all $m \in H$ we have $\Phi^C_{h(m)} = \Phi^C_m$.

We then have for every $n \in H$ that
$$\Phi_n^C (j) = \begin{cases} 1 & \; C(f(\langle n, t(n), j \rangle + 1)) = 1\\ \dive & \; \text{else} \end{cases}$$

Hence for every $n \in H$ we have
\begin{align*}
\langle n, t(n), j \rangle \in A^{b_0}
\iff \ & \varphi_{t(n)}(j) \dn \wedge \Phi_n^{A \Eres \varphi_{t(n)}(j)}(j) \dn \\
\iff \ & \Phi_n^{A \Eres f(\la n, t(n), j \ra) +1}(j) \dn \\
\iff \ & A(f(\langle n, t(n), j \rangle) + 1) = 1
\end{align*}

Thus for $n \in H$ we have $$\Psi^{A \smrstrd f(\langle n,
t(n), j \rangle) \oplus \es^\prime \smrstrd f(\langle n, t(n),
j \rangle)} (\langle n, t(n), j \rangle) = 1 \iff A(f(\langle
n, t(n), j \rangle) + 1) = 1$$

We define a set $S$ by
$$S = \{ \sigma \in \strings \setsep \exists j \, \exists n \in H [\mbox{length}(\sigma)>f(\la n, t(n), j \ra) \mbox{ and }$$
$$\Psi^{\sigma \smrstrd f(\langle n, t(n), j \rangle) \oplus \es^\prime \smrstrd f(\langle n, t(n), j \rangle)} (\langle n, t(n), j \rangle) \conv \not= \sigma(f(\langle n, t(n), j \rangle)+1) \mbox{ or diverges}] \}$$

We note $S$ is $\Sigma_2 (\es^\prime)$ so $S$ is $\Sigma_3$.  Since $A$ is 3-generic and $A$ does not meet $S$, there is an $m$ such that for all $\tau$ extending $A \rstrd m$ we have $\tau \notin S$.  However, any string $\tau$ can be extended to one in $S$ by picking a value for $\tau(f(\langle n, t(n), j \rangle) + 1)$ that disagrees with the prediction of $\Psi$ (if it converges) for some sufficiently large $n \in H$ and $j$.  This is a contradiction so we conclude $A^b \not \leq_{bT} A \oplus \es^\prime$.
\end{proof}

A similar proof can be used to show that if $A$ is 4-random then $A^b \not \leq_{bT} A \oplus \es^\prime$.  Hence, for the bounded Turing degrees, the class of sets where $A^b$ is equivalent to $A \oplus \es^\prime$ has measure zero.

\begin{cor} Let $A$ be 4-random.  Then $A^b \not \leq_{bT} A \oplus \es^\prime$. \end{cor}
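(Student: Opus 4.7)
The plan is to adapt the proof of Theorem \ref{thm_3generic} by replacing the argument against $3$-genericity with a Martin-L\"of test. Suppose for contradiction that $\Psi,f$ witness $A^{b_0}\btl A\oplus\es'$; since replacing $f$ by any larger computable function preserves the reduction, I may assume $f$ is strictly increasing. Performing the same two Recursion Theorem steps used in the proof of Theorem \ref{thm_3generic} yields a computable function $t$ and an infinite computable set $H$ such that, for every $n\in H$ and every $j\num$,
\begin{equation*}
A\bigl(f(\la n,t(n),j\ra)+1\bigr) \;=\; \Psi^{A\Eres f(\la n,t(n),j\ra)\,\oplus\,\es'\Eres f(\la n,t(n),j\ra)}(\la n,t(n),j\ra).
\end{equation*}

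Fix any $n_0\in H$ and write $p_j := f(\la n_0,t(n_0),j\ra)+1$, so that $p_0<p_1<\cdots$. For each $l\geq 1$ set
\begin{equation*}
U_l = \Bigl\{\sigma\in\strings : |\sigma|>p_{l-1}\ \text{and for all}\ j<l,\ \Psi^{\sigma\Eres(p_j-1)\,\oplus\,\es'\Eres(p_j-1)}(\la n_0,t(n_0),j\ra)\dn = \sigma(p_j) \Bigr\}.
\end{equation*}
Each defining clause is $\Sigma^0_1(\es')$, so $\langle U_l\rangle$ is uniformly $\Sigma^0_2$ (hence $\Sigma^0_4$) in $l$, and $A$ meets every $U_l$ by the displayed identity since $A\Eres p_{l-1}\in U_l$.

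The remaining task is the measure estimate $\mu([U_l])\leq 2^{-l}$, which I would prove by iterated conditioning on the bits of $\sigma$ at positions $\leq p_j-1$ for $j=0,1,\dots,l-1$: conditional on such a prefix the prediction $\Psi^{\sigma\Eres(p_j-1)\,\oplus\,\es'\Eres(p_j-1)}(\la n_0,t(n_0),j\ra)$ is a determined bit (or diverges), while $\sigma(p_j)$ is an independent uniform bit, so each stage contributes a factor of at most $1/2$. This makes $\langle U_l\rangle$ a uniformly $\Sigma^0_4$ Martin-L\"of test passed by $A$ at no level, contradicting $4$-randomness. The main obstacle is verifying this measure estimate; the key observation is that the prediction producing the bit at $p_j$ uses only oracle positions $\leq p_j-1$, so the predicted bit is genuinely ``fresh'' given the prefix, and the fact that later predictions at $p_{j'}$, $j'>j$, may look at $\sigma(p_j)$ causes no trouble provided the conditioning is done in the order $j=0,1,\dots,l-1$.
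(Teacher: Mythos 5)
Your proposal is correct, and it follows a genuinely different route from the paper.  Both arguments start from the same Recursion Theorem machinery (producing $t$ and $H$), but then diverge.  The paper re-uses the $\Sigma_3$ set $S$ from the proof of Theorem~\ref{thm_3generic} and defines $U_i$ as the set of strings of a fixed length $l(i)+1$ that avoid $S$; this is a $\Pi_3$ (hence $\Sigma_4$) condition, and the measure bound $\mu(U_i)\leq 2^{-i}$ comes from a pruning observation: for any $\tau$ of length $l(m)$, at least one of $\tau\concat 0,\,\tau\concat 1$ lies in $S$, and $S$ is closed under extensions.  You instead fix a single $n_0\in H$, pass to the strictly increasing sequence $p_j = f(\la n_0,t(n_0),j\ra)+1$, and define $U_l$ positively as the set of strings on which $\Psi$ correctly predicts $\sigma(p_j)$ for all $j<l$; the measure bound then follows from a conditional-probability computation, where the essential point (which you correctly flag) is that the prediction at $p_j$ is determined by $\sigma\Eres(p_j-1)$, so $\sigma(p_j)$ is a fresh fair bit given the conditioning history, and the factors multiply to $2^{-l}$.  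One notable difference you underclaim: because the quantifier over $j$ in your $U_l$ is bounded by $l$, the condition is $\Sigma^0_1(\es')=\Sigma^0_2$, not merely $\Sigma^0_4$; so your argument actually establishes the stronger statement that 2-randomness of $A$ already rules out $A^b\btl A\oplus\es'$.  Your version is more self-contained (it bypasses $S$ entirely) and swaps the paper's tree-pruning estimate for a conditioning argument, at no cost in rigor; it also needs the harmless normalization that $f$ is strictly increasing, which you correctly note preserves the reduction.
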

\begin{proof} Suppose not.  Then $A^{b_0} \leq_{bT} A \oplus \es^\prime$.  Let $\Psi$ and $f$ be such that \\$\Psi^{A \smrstrd f(n) \oplus \es^\prime \smrstrd f(n)} (n) = A^{b_0} (n)$ for all $n$.

Let $t$, $H$, and $S$ be as in the proof Theorem
\ref{thm_3generic}.  Since $f$, $t$, and $H$ are computable, we
can find a computable, strictly increasing function $l$ such
that for all $m \num$ we have $l(m) = f(\la n, t(n), j \ra)$
for some $j$ and some $n \in H$.

For each $i \num$ let $U_i = \{ \sigma \setsep \sigma \notin S$ and length$(\sigma) = l(i)+1 \}$.  We note that the $U_i$ are uniformly $\Pi_3$ since $S$ is $\Sigma_3$ and $l$ is computable.  Since $A$ does not meet $S$, we know that $A$ meets every $U_i$.

We note from the definition of $S$ that if $\tau$ is any string of length $l(m)$ for some $m$, then at least one of $\tau \concat 0$ and $\tau \concat 1$ is in $S$.  We also note $S$ is closed under extensions so if $\sigma \notin U_i$, length$(\sigma) \geq l(i)$, and $\rho$ extends $\sigma$ then $\rho \notin U_j$ for any $j \geq i$.  Hence $\mu(U_i) \leq 2^{-i}$.  We conclude that $A$ is not 4-random, for a contradiction.  Thus $A^b \not \leq_{bT} A \oplus \es^\prime$.
\end{proof}

\section{Ershov Hierarchy}

The iterates of the jump correspond to completeness in the arithmetic hierarchy; the $n$-th jump is $\Sigma_n$ complete.  We will show that the iterates of the bounded jump correspond to completeness in the Ershov hierarchy.

Fix a canonical, computable coding of the ordinals less than $\omega^\omega$.  Since we do not use ordinals above $\omega^\omega$ in this paper, the details of the coding are not significant.  We say a function on an ordinal $\alpha$ is (partial) computable if the corresponding function on codes for the ordinal $\alpha$ is (partial) computable.

For $\alpha \geq \omega$, we say that a set $A$ is $\alpha$-c.e.\ if there is a partial computable $\psi:\omega \times \alpha \to \{0,1\}$ such that for every $n \in \omega$, there exists a $\beta < \alpha$ where $\psi (n, \beta) \conv$ and $A(n) = \psi (n, \gamma)$ where $\gamma$ is least such that $\psi (n, \gamma) \conv$ \cite{REA}.

It is a well known result that $X \leq_{bT} \es' \iff X
\leq_{tt} \es^\prime \iff X$ is $\omega$-c.e.\ \cite{Nies}.
Using the bounded jump, this is $X \leq_{bT} \es^b \iff X \leq_{tt} \es^b \iff X$ is
$\omega$-c.e.  We wish to extend this observation to higher
powers of $\omega$.  In fact, we are able to establish a
slightly stronger result.

\begin{theorem} For any set $X$ and $n \geq 2$ we have $X \leq_{bT} \es^{nb} \iff X$ is $\omega^n$-c.e.\ $\iff X \leq_1 \es^{nb}$.  \label{ErshovThm} \end{theorem}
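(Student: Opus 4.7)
The implication $X \leq_1 \es^{nb} \Rightarrow X \leq_{bT} \es^{nb}$ is immediate since $\leq_1$ refines $\leq_{bT}$, so the substantive content is the cycle $X \leq_{bT} \es^{nb} \Rightarrow X$ is $\omega^n$-c.e.\ $\Rightarrow X \leq_1 \es^{nb}$, which I would attack by induction on $n \geq 2$, establishing both halves of the cycle in parallel.

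For $X \leq_{bT} \es^{nb} \Rightarrow X$ is $\omega^n$-c.e., the first step is to show by induction on $n$ that $\es^{nb}$ is itself $\omega^n$-c.e.\ The base case $n = 1$ is the classical fact that $\es^b \equiv_1 \es'$ is $\omega$-c.e. For the inductive step, the approximation of $\es^{(n+1)b}(x)$ is obtained by plugging an $\omega^n$-c.e.\ approximation of $\es^{nb}$ into each computation $\Phi_x^{\es^{nb} \smrstrd \varphi_i(x)}(x)$ for $i \leq x$ and taking the bounded disjunction; the ordinal bookkeeping allocates a budget of $\omega^n$ for each oracle position that might change, so the total budget stays a computable ordinal strictly below $\omega^{n+1}$. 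Once $\es^{nb}$ is known to be $\omega^n$-c.e., a matching argument shows that the class of $\omega^n$-c.e.\ sets is closed under $\leq_{bT}$: if $X \leq_{bT} Y$ with use $g$, then a budget of $\omega^n$ per position on the first $g(x)$ positions of $Y$ suffices, and the hypothesis $n \geq 2$ is used to absorb finite multiples like $g(x) \cdot \omega^{n-1}$ cleanly into the $\omega^n$ bound.

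For $X$ is $\omega^n$-c.e.\ $\Rightarrow X \leq_1 \es^{nb}$, the plan is a coding argument via the $s$-$m$-$n$ theorem. Given an $\omega^n$-c.e.\ witness $\psi: \omega \times \omega^n \to \{0,1\}$ for $X$, I would define a computable injective $h$ and a uniform family of Turing functionals $\Phi_{h(x)}$ so that $\Phi_{h(x)}^{\es^{(n-1)b} \smrstrd \varphi_i(h(x))}(h(x))$ converges for some $i \leq h(x)$ precisely when the least-convergent value of $\psi(x, \cdot)$ is $1$. For $n = 2$, the outer existential $\exists i \leq h(x)$ and the use $\varphi_i(h(x))$ together furnish the two $\omega$-layers of the $\omega \cdot \omega$ schedule, with the oracle $\es'$ performing the inner c.e.\ search. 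For the inductive step, I would peel off one outer $\omega$ factor of the $\omega^{n+1}$-c.e.\ presentation of $X$, apply the inductive hypothesis to the residual $\omega^n$-c.e.\ content to obtain $\leq_1$ reductions to $\es^{nb}$, and bundle these via an analogous $s$-$m$-$n$ construction whose convergence on oracle $\es^{nb}$ is witnessed by the outer bounded jump.

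The main obstacle I anticipate is the coding direction. The $s$-$m$-$n$ construction must simultaneously (i) produce an index $h(x)$ large enough that the intended witness $i$ satisfies $i \leq h(x)$, (ii) make the use $\varphi_i(h(x))$ precisely match the segment of $\es^{(n-1)b}$ needed to decode the inner approximation of $X(x)$, and (iii) remain one-to-one so that the reduction is $\leq_1$ and not merely $\leq_m$. A Recursion Theorem fixed point combined with the kind of uniform padding used in the proof that $A^{b_0} \leq_1 A^b$ should handle this, but keeping the three requirements compatible through the inductive step is delicate. This interplay is also the natural place where the restriction $n \geq 2$ enters: the extra outer iterate of the bounded jump provides the room needed for the witness-and-use engineering, whereas at $n = 1$ the classical result only delivers $\leq_{tt}$ and not $\leq_1$.
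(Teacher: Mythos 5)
Your outline tracks the paper's proof quite closely. The paper proves three lemmas: (i) the class of $\omega^k$-c.e.\ sets is downward closed under $\leq_{bT}$; (ii) if $A$ is $\omega^k$-c.e.\ then $A^b$ is $\omega^{k+1}$-c.e.; and (iii) if $A$ is $\omega^k$-c.e.\ with $k \geq 2$ then $A \leq_1 \es^{kb}$ (proved first for $k=2$ and then by induction). Combined with $\es^b = \es'$ being $\omega$-c.e., these give the full cycle of implications. Your plan --- show $\es^{nb}$ is $\omega^n$-c.e., show closure of $\omega^n$-c.e.\ under $\leq_{bT}$, and code an $\omega^n$-c.e.\ approximation back into $\es^{nb}$ via an $s$-$m$-$n$/Recursion Theorem argument with careful attention to indices, use bounds, and injectivity --- is the same decomposition.

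One correction: you say the hypothesis $n \geq 2$ is used in the closure-under-$\leq_{bT}$ step ``to absorb finite multiples like $g(x) \cdot \omega^{n-1}$.'' That is not so. If $B$ is $\omega^k$-c.e.\ and $A \leq_{bT} B$ with use $g$, each of the $g(x)$ oracle positions contributes an ordinal strictly below $\omega^k$, and the term-wise (commutative) sum of finitely many ordinals each below $\omega^k$ is again below $\omega^k$. This works for any $k \geq 1$; the paper's closure lemma is indeed stated for all $k > 0$, and likewise the lemma that $A$ being $\omega^k$-c.e.\ implies $A^b$ is $\omega^{k+1}$-c.e.\ holds for all $k > 0$. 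The restriction $n \geq 2$ enters only in the $\leq_1$ direction, exactly where you correctly locate it at the end of your proposal: at $n=1$ one only gets $X$ $\omega$-c.e.\ $\iff X \leq_{tt} \es'$, and the $s$-$m$-$n$ padding that forces a $1$-reduction requires the extra iterate of the bounded jump to supply room.
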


A set $A$ is a $tt$-cylinder if for all $X$ we have $X \leq_{tt} A \Rightarrow X \leq_1 A$ \cite{OdifreddiCRT}.

\begin{cor} For all $n \geq 2$ we have $\es^{nb}$ is a $tt$-cylinder.  \end{cor}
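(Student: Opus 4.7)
The plan is to deduce this immediately from the preceding Theorem \ref{ErshovThm}, using the standard fact that $tt$-reducibility is stronger than $bT$-reducibility.

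First I would recall that if $X \leq_{tt} A$, then $X \leq_{bT} A$. This is because a $tt$-reduction is given by a Turing functional $\Phi$ that is total on all oracles, together with a computable function providing the query positions; totality on all oracles forces the use to be bounded by a computable function of the input (by compactness, the use on a specific oracle is in particular bounded by the maximum query position supplied by the computable $tt$-condition generator). Hence the $tt$-reduction is in particular a $bT$-reduction.

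Next, given any $X$ with $X \leq_{tt} \es^{nb}$, the observation above yields $X \leq_{bT} \es^{nb}$. Applying Theorem \ref{ErshovThm} (in the direction $X \leq_{bT} \es^{nb} \Rightarrow X \leq_1 \es^{nb}$), we conclude $X \leq_1 \es^{nb}$. Since $X$ was arbitrary, this says precisely that $\es^{nb}$ is a $tt$-cylinder for every $n \geq 2$.

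There is no real obstacle here: all the work is already done in Theorem \ref{ErshovThm}, which collapses the three reducibilities $\leq_{bT}$, $\leq_{tt}$, and $\leq_1$ on the cone below $\es^{nb}$ into a single notion (being $\omega^n$-c.e.). The corollary is essentially just repackaging the extremes of that three-way equivalence into the definition of $tt$-cylinder. If I wanted to be extra careful, I would note that the result is vacuous for $n=0$ and requires the analogue at $n=1$ (the classical $\omega$-c.e.\ characterization, which was also stated in the paper) only if one wanted to extend the corollary to $n=1$, but since the statement is restricted to $n \geq 2$ no such extension is needed.
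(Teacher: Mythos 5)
Your proof is correct and is exactly the argument the paper intends: $\leq_{tt}$ implies $\leq_{bT}$, then Theorem \ref{ErshovThm} upgrades $\leq_{bT} \es^{nb}$ to $\leq_1 \es^{nb}$. The paper gives no separate proof for this corollary because it is immediate from the theorem in precisely this way.
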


The theorem follows from the lemmas below.  We first introduce some notation.  Let $+_c$ denote commutative addition of ordinals (term-wise sum of coefficients of ordinals in Cantor normal form) \cite{AshKnight}.  We will use two properties of commutative addition.  First, given $\alpha_1 \ldots \alpha_n$ and $\beta_1 \ldots \beta_n$ such that $\beta_i \leq \alpha_i$ for all $i \leq n$ and $\beta_j < \alpha_j$ for some $j \leq n$ then $\beta_1 +_c \beta_2 +_c \ldots +_c \beta_n < \alpha_1 +_c \alpha_2 +_c \ldots +_c \alpha_n$.  Also, if for some $\gamma$ we have $\alpha_i < \omega^\gamma$ for all $i$, then $\alpha_1 +_c \alpha_2 +_c \ldots +_c \alpha_n < \omega^\gamma$.

We start by proving that being $\omega^k$-c.e.\ is closed
downward in the bounded Turing degrees.  For the proof, we
suppose $\Phi$ and $f$ witness $A \leq_{bT} B$ and $\psi$
witnesses $B$ is $\omega^k$-c.e.  We will then build $\chi$ to
witness $A$ is $\omega^k$-c.e. In order to estimate $A(n)$, we
will estimate $B\Eres f(n)$ using $\psi(i, \alpha_i)$ for $i
\leq f(n)$, and record the output of $\Phi$ on this estimate at
$\chi (n, \alpha_1 +_c \ldots +_c \alpha_{f(n)})$.

\begin{lemma} Let $k > 0$ and let $A$ and $B$ be sets such that $A \leq_{bT} B$ and $B$ is $\omega^k$-c.e.  Then $A$ is $\omega^k$-c.e. \end{lemma}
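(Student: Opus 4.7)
The plan is to follow the hint preceding the lemma. Fix $\Phi$ and a total computable $f$ witnessing $A \leq_{bT} B$, so $A(n) = \Phi^{B \Eres f(n)}(n)$ for all $n$, and fix a partial computable $\psi : \omega \times \omega^k \to \{0,1\}$ witnessing that $B$ is $\omega^k$-c.e. For each $i$, let $\gamma_i$ denote the least ordinal at which $\psi(i,\cdot)$ converges, so that $B(i) = \psi(i, \gamma_i)$.

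I would define a partial computable $\chi : \omega \times \omega^k \to \{0,1\}$ as follows. On input $(n,\alpha)$, enumerate the finitely many tuples $(\alpha_0, \ldots, \alpha_{f(n)})$ of ordinals below $\omega^k$ with $\alpha_0 +_c \cdots +_c \alpha_{f(n)} = \alpha$. For each such tuple, attempt to compute $\psi(i, \alpha_i)$ for every $i \leq f(n)$; if all converge, form $\sigma$ of length $f(n)+1$ by $\sigma(i) = \psi(i, \alpha_i)$ and return $\Phi^\sigma(n)$ (if it converges), taking the value produced by whichever tuple halts first.

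The verification uses the two properties of $+_c$ noted in the excerpt. Since each $\gamma_i < \omega^k$ and only finitely many are summed, $\gamma^\ast := \gamma_0 +_c \cdots +_c \gamma_{f(n)}$ still lies below $\omega^k$; hence $\chi(n, \gamma^\ast) \conv$ with $\sigma = B \Eres f(n)$, giving the correct value $A(n)$. Strict monotonicity of commutative addition then shows $\gamma^\ast$ is least with $\chi(n,\cdot)\conv$: for any $\alpha < \gamma^\ast$ and any candidate decomposition $\alpha = \alpha_0 +_c \cdots +_c \alpha_{f(n)}$, some $\alpha_j$ must be strictly below $\gamma_j$, so $\psi(j, \alpha_j)\dive$ and no decomposition of $\alpha$ can witness convergence. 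The same monotonicity argument forces the only decomposition of $\gamma^\ast$ satisfying all the $\psi$-conditions to be $(\gamma_0, \ldots, \gamma_{f(n)})$ itself.

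The main obstacle I anticipate is this last well-definedness point: at $\alpha > \gamma^\ast$ several tuples may simultaneously satisfy the $\psi$-convergence conditions while producing different $\Phi^\sigma(n)$ values, so one must be careful that $\chi$ is a bona fide partial function. The ``halts first'' convention handles this, and the behaviour of $\chi$ at non-minimal $\alpha$ is immaterial, because the definition of $\omega^k$-c.e.\ reads off $A(n)$ only at the least converging ordinal, where (as just shown) the decomposition is forced to be $(\gamma_0, \ldots, \gamma_{f(n)})$ and the output of $\chi$ is exactly $\Phi^{B \Eres f(n)}(n) = A(n)$.
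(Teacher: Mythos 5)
Your proof is correct and uses essentially the same idea as the paper's: index $\chi(n,\cdot)$ by the commutative sum $\alpha_0 +_c \cdots +_c \alpha_{f(n)}$ of ordinals tracking the current approximation to each bit of $B \Eres f(n)$, and rely on strict monotonicity of $+_c$ to force the least convergence point to be $\gamma_0 +_c \cdots +_c \gamma_{f(n)}$. The paper formalizes this as a stage-by-stage construction that only ever assigns one ordinal per stage (so the collision issue you worry about never arises), whereas you define $\chi(n,\alpha)$ per input by dovetailing over all decompositions of $\alpha$ and resolving ambiguity by ``halts first'' --- a harmless difference since, as you correctly observe, the definition of $\omega^k$-c.e.\ only reads $\chi$ at the least converging ordinal, where the decomposition is uniquely $(\gamma_0,\ldots,\gamma_{f(n)})$.
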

\begin{proof}
Let $\Phi$ and $f$ witness $A \leq_{bT} B$ and let $\psi$
witness $B$ is $\omega^k$-c.e. We will define a function $\chi$
to witness that $A$ is $\omega^k$-\ce by stages as follows.
Fix $n$ (we simultaneously follow the same procedure for each
$n$).

At each stage $s$, for $i \leq f(n)$, let $\alpha_i^s$ be the
least ordinal such that $\psi_s(i, \alpha_i^s) \dn$, if it
exists. Define a string $\sigma_s(\alpha_0^s, \ldots,
\alpha_{f(n)}^s)$ of length $f(n)$ by letting $\sigma_s (i) =
\psi_s (i, \alpha_i^s)$.

Let $s_0$ be the least stage where $\alpha_i^{s_0}$ are defined
for all $i \leq f(n)$. Set $$\chi (n, \alpha_0^{s_0} +_c \ldots
+_c \alpha_{f(n)}^{s_0})) = \Phi^{\sigma_{s_0}(\alpha_0^{s_0},
\ldots, \alpha_{f(n)}^{s_0})} (n).$$ Note that $\alpha_0^{s_0}
+_c \ldots +_c \alpha_{f(n)}^{s_0} < \omega^k$.

At stage $s+1 > s_0$, if $\alpha_i^{s+1} < \alpha_i^s$ for some
$i \leq f(n)$ then define
$$\chi (n, \alpha_0^{s+1} +_c \ldots
+_c \alpha_{f(n)}^{s+1})) = \Phi^{\sigma_{s+1}(\alpha_0^{s+1},
\ldots, \alpha_{f(n)}^{s+1})} (n).$$ This is possible since
$\alpha_0^{s+1} +_c \ldots +_c \alpha_{f(n)}^{s+1} <
\alpha_0^{s} +_c \ldots +_c \alpha_{f(n)}^{s}$.

It is clear that $\chi$ is partial recursive.  Let $n$ be arbitrary, and for $i \leq f(n)$ let $\beta_i$ be least such that $\psi (i, \beta_i) \conv$.  Let $\gamma = \beta_0 +_c \ldots +_c \beta_{f(n)}$.  Then $\gamma$ is least such that $\chi (n, \gamma) \conv$ and $\chi (n, \gamma) = A(n)$.  Thus $\chi$ witnesses $A$ is $\omega^k$-c.e.
\end{proof}

We next prove that if $A$ is $\omega^k$-c.e.\ then $A^b$ is $\omega^{k+1}$-c.e.  Combined with the previous lemma this will give us that $X \leq_{bT} \es^{nb} \Rightarrow X$ is $\omega^n$-c.e.

For the proof, we will let $\psi$ witness that $A$ is $\omega^k$-c.e.\ and will define $\chi$ to witness that $A^b$ is $\omega^{k+1}$-c.e.  We will start with $\chi(n, \omega^k \cdot n) = 0$ and each time we witness a new, longer $\varphi_i(n) \conv$ for some $i \leq n$ we will move down to a new $\omega^k$ level.  At a fixed level, we will record estimates of $A^b$ based on estimates of $A \rstrd \varphi_i(n)$ in a manner similar to the previous lemma.

\begin{lemma} Let $k > 0$ and let $A$ be a set such that $A$ is $\omega^k$-c.e.  Then $A^b$ is $\omega^{k+1}$-c.e.  \label{jumpErshov} \end{lemma}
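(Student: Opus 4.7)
The plan is to let $\psi$ witness that $A$ is $\omega^k$-c.e.\ and build $\chi$ stage-by-stage. For each $n$ and each stage $s$, I will track $I_s = \{i \leq n : \varphi_i(n)[s]\conv\}$; set $v^*_s = \max_{i\in I_s}\varphi_i(n)$ and $\alpha_j^s = \min\{\alpha : \psi_s(j,\alpha)\conv\}$ for $j<v^*_s$; form the strings $\sigma_i^s$ of length $\varphi_i(n)$ by $\sigma_i^s(j) = \psi_s(j,\alpha_j^s)$; set $\beta^s = \alpha_0^s +_c \cdots +_c \alpha_{v^*_s-1}^s$, which stays below $\omega^k$ by the commutative-sum property cited; and let $b_s = 1$ iff some $i \in I_s$ has $\Phi_n^{\sigma_i^s}(n)[s]\conv$. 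The level $\ell_s \in \{0,\dots,n\}$ starts at $n$ and drops by $1$ each time a new $\varphi_i(n)$ strictly longer than all previous converges, which happens at most $n+1$ times.

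The natural state ordinal is $\gamma_s = \omega^k\ell_s + \beta^s$; it is non-increasing and bounded by $\omega^k(n+1) < \omega^{k+1}$, so it stabilizes at some $\gamma^* = \omega^k\ell^* + \beta^*$, with $b_s$ stabilizing there to $A^b(n)$. Following the previous lemma's recipe, I would like to record $\chi(n,\gamma_s) = b_s$ at each decrease of $\gamma_s$. The extra subtlety special to this lemma is that at a fixed state $(\ell_s,\beta^s)$ the guess $b_s$ can still flip once from $0$ to $1$ the first time some $\Phi_n^{\sigma_i^s}(n)$ converges, with no accompanying change in $\gamma_s$.

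To handle this, I will use two ordinal slots per visited state $(\ell,\beta)$: an \emph{entry} slot at $\omega^k\ell + \beta' + 1$ and an \emph{override} slot at $\omega^k\ell + \beta'$, where $\beta'$ denotes $\beta$ with every Cantor-normal-form coefficient doubled. The entry slot is defined on first visit to the state (with the current $b_s$), and the override slot is set to $1$ as soon as $b_s = 1$ at that state, if ever. The doubling trick forces $\beta'$ to have an even finite part or to be a limit ordinal, so $\beta_1' \neq \beta_2' + 1$ for any $\beta_1,\beta_2 < \omega^k$; hence the entry slot of one state never collides with the override slot of another. Since $\beta \mapsto \beta'$ is strictly monotone and $\beta' < \omega^k$, the entire assignment fits inside $\omega^k(n+1) < \omega^{k+1}$.

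For verification: if $A^b(n) = 1$, the final-state override slot is defined with value $1$ at $\omega^k\ell^* + (\beta^*)'$, and by monotonicity of $\beta\mapsto\beta'$ together with the parity argument this is strictly less than every other defined slot. If $A^b(n) = 0$, the final-state override is never defined, but the entry slot there is defined with value $0$ at $\omega^k\ell^* + (\beta^*)' + 1$, still strictly below every intermediate slot. In either case the least $\gamma$ with $\chi(n,\gamma)\conv$ equals $A^b(n)$. The main obstacle is precisely the within-state flip of $b_s$ with no change in $\gamma_s$; the CNF-doubling trick is what lets the entry and override slots coexist without any ordinal collisions.
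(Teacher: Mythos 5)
Your proof is correct and follows essentially the same strategy as the paper's: drop a copy of $\omega^k$ each time a longer $\varphi_i(n)$ converges, track the approximation to $A\upharpoonright\varphi_i(n)$ via the commutative sum $\beta^s$ of the $\alpha_j^s$, and reserve two ordinal slots per state to absorb the within-state $0\to 1$ flip of $b_s$. The only difference is cosmetic: the paper creates the second slot by adding $u(\beta)$ (doubling only the units coefficient) and then offsets by $+1$ and $+2$, whereas you double every CNF coefficient and offset by $+0$ and $+1$; both yield the same strict monotonicity and non-collision properties.
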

\begin{proof} Let $\psi$ witness that $A$ is $\omega^k$-c.e.  We will define a function $\chi$ to witness that $A^b$ is $\omega^{k+1}$-c.e.  Fix $n$ (we simultaneously follow the same procedure for each $n$).

For an ordinal $\beta$, let $u(\beta)$ be the coefficient of
the units digit of $\beta$ in Cantor normal form.  We again let
$\alpha_i^s$ be the least ordinal such that $\psi_s(i,
\alpha_i^s) \dn$, if it exists, and define a string
$\sigma_s(\alpha_0^s, \ldots, \alpha_{m}^s)$ of length $m$ by
letting $\sigma_s (i) = \psi_s (i, \alpha_i^s)$.  Indeed, we
will assume wlog that $\alpha_i^s$ is defined at each stage $s$
by running the computation $\psi$ for longer than $s$ steps if
necessary.

Let $r(l, \alpha_0^s, \ldots, \alpha_m^s) = \omega^k
\cdot l +_c \alpha_0^{s} +_c \ldots +_c \alpha_{m}^{s} +_c
u(\alpha_0^{s} +_c \ldots +_c \alpha_{m}^{s})$.  Note that if all
$\alpha_i^s < \omega^k$, then $r(l, \alpha_0, \ldots, \alpha_m) <
\omega^{k+1}$. Note also that if $\beta_i \leq \alpha_i$ for
all $i \leq m$ and $l' \leq l$, and if one of the inequalities
is strict, then $r(l', \beta_0, \ldots, \beta_m) +2 < r(l,
\alpha_0, \ldots, \alpha_m) +1$.

We let $\chi (n, \omega^k \cdot n) = 0$ and set bookkeeping
variables $l_0 = n$ and $m_0 = 0$.  Every time we see
$\varphi_i (n) \conv > m$ for some $i \leq n$ we will decrease $l$
by one and let $m = \varphi_i (n)$. We note this can
happen at most $n$ many times.

At stage $s+1$: If $\varphi_{i, s+1} (n) \conv > m_s$ for some
$i \leq n$, define $l_{s+1}=l_s -1$ and let $m_{s+1} =
\varphi_i (n)$. Otherwise, let $l_{s+1}=l_s$ and $m_{s+1} =
m_s$.

If $l_{s+1}< l_s$ or if $\alpha_i^{s+1} < \alpha_i^s$ for some
$i \leq m_{s+1}$, then let $\chi (n, r(l, \alpha_0, \ldots,
\alpha_m) + 2)[s+1] = 0$.

If $\Phi_n^{\sigma(l, \alpha_0, \ldots, \alpha_m)} (n) \conv
[s+1]$, then set $\chi (n, r(l, \alpha_0 \ldots \alpha_m) + 1)
[s+1] = 1$.

This completes the construction.

We note $\chi$ is partial recursive.  Let $n$ be arbitrary and
let $m$ be the largest value of $\varphi_i(n)$ for $i \leq n$
such that $\varphi_i(n) \conv$.  For $j \leq m$, let $\beta_j$
be least such that $\psi(j, \beta_j) \conv$.  Let $l$ be least
such that for some $\delta < \omega^k$ we have $\chi (n,
\omega^k \cdot l + \delta) \conv$.  Then $\chi (n, r(l, \beta_0
\ldots \beta_m) + 2) = 0$ and $\chi (n, r(l, \beta_0 \ldots
\beta_m) + 1) \conv = 1$ iff $n \in A^b$.  For all $\gamma \leq
r(l, \beta_0 \ldots \beta_m)$ we have $\chi (n, \gamma) \dive$.
Therefore $\chi$ witnesses $A^b$ is $\omega^{k+1}$-c.e.
\end{proof}

We note that the proofs for the above lemmas hold for any
ordinal $\omega^\gamma$ such that $0 < \omega^\gamma <
\omega_1^{CK}$.

To complete the proof of the theorem, we wish to show that if $A$ is $\omega^k$-c.e.\ then $A \leq_1 \es^{kb}$.  We start by proving the statement for $k = 2$.

For the proof, suppose $\psi$ witnesses that $A$ is $\omega^2$-c.e.  Let $n \num$ and let $m$ be first such that we see $\psi (n, \omega \cdot m + j)\conv$ for some $j$.  To determine if $n \in A$, we need to know enough of $\es^b$ to answer the $\Sigma_1$ questions $\exists j \,[\psi(n, \omega \cdot i + j)\conv]$ for each $i < m$.  In each case, if the answer is yes, first witnessed by $\omega \cdot i + k$, we then need to know if $\psi(n, \omega \cdot i + j)\conv$ for all $j < k$.  There is no computable bound which can be determined in advance stating how much of $\es^b$ is needed to answer all of these questions.  However, we can in advance bound the indices of the computable functions needed to determine how much of $\es^b$ will be used.  Hence we can bound the amount of $\es^{2b}$ required to have enough access to $\es^b$ to answer these questions.

\begin{lemma} Let $A$ be a set such that $A$ is $\omega^2$-c.e.  Then $A \leq_1 \es^{2b}$. \label{Erbase} \end{lemma}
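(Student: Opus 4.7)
The plan is to prove the slightly stronger statement $A \leq_1 (\es^b)^{b_0}$ and then compose with $(\es^b)^{b_0} \leq_1 (\es^b)^b = \es^{2b}$, the case $B = \es^b$ of the earlier theorem $B^{b_0} \leq_1 B^b$. Passing to the $b_0$ form is convenient because the index $i$ bounding the oracle use is free rather than constrained to satisfy $i \leq x$, which gives us room to encode the (non-computable) bookkeeping on the use.

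Fix $\psi$ witnessing that $A$ is $\omega^2$-c.e., and let $m(n)$ be the least $i$ such that $\exists j\, [\psi(n, \omega \cdot i + j) \conv]$. By $s$-$m$-$n$ and padding, I would first define a computable, injective $q \colon \omega \times \omega \to \omega$, strictly increasing in its second argument, with $q(n, i) \in \es^b$ iff $\exists j\, [\psi(n, \omega \cdot i + j) \conv]$. Next, by $s$-$m$-$n$, fix $i(n)$ so that $\varphi_{i(n)}(n)$ dovetails over pairs $(M, j) \num$ and, at the first pair with $\psi(n, \omega \cdot M + j) \conv$, outputs $q(n, M) + 1$. Since $A$ is $\omega^2$-c.e., this halts for every $n$, and minimality of $m(n)$ forces the reported $M$ to satisfy $M \geq m(n)$, so $\varphi_{i(n)}(n) > q(n, m(n))$. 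Fix a single index $e$ such that $\Phi_e^C(n)$ searches $C$ at positions $q(n, 0), q(n, 1), \ldots$ for the least $m$ with $C(q(n, m)) = 1$, waits for the least $j$ with $\psi(n, \omega \cdot m + j) \conv$, and converges iff the resulting value equals $1$. Set $h(n) = \la e, i(n), n \ra$, which is computable and injective.

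For the verification, let $W = \varphi_{i(n)}(n)$. Since $q(n, i) \leq q(n, m(n)) < W$ for all $i \leq m(n)$, the oracle $\es^b \Eres W$ agrees with $\es^b$ at every position the search queries, so $\Phi_e^{\es^b \Eres W}(n)$ halts at exactly $m = m(n)$ and converges iff $\psi(n, \omega \cdot m(n) + j_{m(n)}) = 1$, i.e.\ iff $A(n) = 1$. Thus $h(n) \in (\es^b)^{b_0}$ iff $n \in A$, giving $A \leq_1 (\es^b)^{b_0} \leq_1 \es^{2b}$. The main obstacle being finessed here is that $q(n, m(n))$, the $\es^b$-use actually required, has no computable bound in $n$, so a witness $i \leq h(n)$ cannot be supplied directly in the stricter $\es^{2b}$ formulation; the fix is to switch to the $b_0$ form and let $\varphi_{i(n)}$ dovetail, since any first-seen convergence of $\psi$ must occur at a level $\geq m(n)$, which is exactly what the bounded-use computation requires.
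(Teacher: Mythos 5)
Your proof is correct, and it takes a genuinely different (and cleaner) route from the paper's. The paper constructs a reduction directly into $\es^{2b}$, where the witness index $i$ is constrained to satisfy $i \leq f(n)$; handling this constraint is precisely what necessitates the paper's machinery of bounding the relevant indices $v(i,n)$ by a computable $u(n)$ and then choosing $f(n) > u(n)$. You instead target $(\es^b)^{b_0}$, where the triple $\la e, i, j \ra$ carries no such constraint on $i$, so the dovetailing index $i(n)$ can be supplied directly without the max-over-$i\le g(n)$ bookkeeping. You then compose with the paper's earlier theorem $X^{b_0} \leq_1 X^b$ (at $X = \es^b$) to land in $\es^{2b}$. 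In effect you factor the awkward index-bounding work into that prior theorem (whose proof already uses the padding lemma to arrange $g(\la e,i,j \ra) \geq k(i,j)$), rather than redoing it inline. The substance of the two arguments is the same -- use $\es^b$ to answer the $\Sigma_1$ questions about which $\omega$-block of $\psi$ is occupied and then read off the value, with the dynamically discovered use bounded by a first-observed convergence of $\psi$ -- but your modularization is tidier. One thing worth flagging: since the paper later reuses the explicit $f$ construction for the inductive step of the general $\omega^k$ lemma (observing that an index for $f$ is uniform in an index for $\psi$), adopting your base case would require similarly checking that $h$ (hence a suitable $f$ after composing with the $X^{b_0}\leq_1 X^b$ reduction) is uniform in $\psi$, which it is, but should be said.
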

\begin{proof} Let $\psi$ witness that $A$ is $\omega^2$-c.e.  We will define several functions, ending in a computable $f$ such that $n \in A \iff f(n) \in \es^{2b}$.

Let $g$ be a computable function such that $g(n) = i$ where the first time we observe $\psi(n,\alpha) \conv$ is $\alpha = \omega \cdot i + j$ for some $j$.  Let $q(i,n)$ be the first $m$ observed such that $\psi(n, \omega \cdot i + m) \conv$.  The function $q$ is partial computable since it may be there is no such $m$ for the given $i$.

Let $\tilde{h}(i,x,n)$ denote the spot of $\es^b$ which answers the question $\exists m \leq x$ \\ $[\psi(n,\omega \cdot i + m)\conv]$.  Let $\tilde{r}(n,i)$ denote the spot of $\es^b$ which answers the question $\exists m [\psi(n,\omega \cdot i + m)\conv]$.  We then let $h(i,n) = \max \{ \tilde{h}(i,x,n) \setsep x \leq q(i,n) \}$ and $r(n) = \max \{ \tilde{r}(n,x) \setsep x \leq g(n) \}$.  The functions $\tilde{h}, \tilde{r},$ and $r$ are computable and $h$ is partial computable, converging wherever $q$ does.

Let $p(n)$ be the least $i$ such that for some $m$ we have $\psi(n,\omega \cdot i + m)\conv$.  We can compute $p(n)$ from $\es^b \rstrd r(n)$.  We note that $h(p(n),n)$ exists and we can determine if $n \in A$ from $\es^b \rstrd \max \{r(n), h(p(n),n) \}$.

Let $v$ be a computable function defined by $\varphi_{v(i,n)} (y) = h(i,n) + r(n)$ ($y$ is a dummy variable).  Let $u(n) = \max \{ v(i,n) \setsep i \leq g(n) \}$.  The function $u$ is computable and if we let $j = v(p(n),n)$ then $j \leq u(n)$, the function $\varphi_j (y) \conv$, and $\es^b \rstrd \varphi_j (y)$ suffices to determine if $n \in A$ (for any $y$).

We now define $f(n) > u(n)$ to be such that (for any $y$), $\Phi^{\es^b}_{f(n)}(y)$ runs the calculation to determine if $n \in A$, and converges iff $n \in A$.  Explicitly, we define $f(n) > u(n)$ such that $\Phi^C_{f(n)} (y)$ is the partial computable function determined by the following steps.  First, we let $x \leq g(n)$ be least such that $C(\tilde{r}(n,x))=1$.  Next, we let $t$ be first such that we observe $\psi(n,\omega \cdot x + t)\conv$.  We then let $z \leq t$ be least such that $C(\tilde{h}(x,z,n))=1$.  Finally, we say $\Phi^C_{f(n)}(y) \conv$ if $\psi(n,\omega \cdot x + z) = 1$ and $\Phi^C_{f(n)}(y) \dive$ if $\psi(n,\omega \cdot x + z) = 0$ (or if any of the above steps can't be completed).

We note that $f$ is computable, and if $C$ is a sufficiently long initial segment of $\es^b$ then $n \in A$ iff $\Phi^C_{f(n)}(y) \conv$.  Recall that for any $n$, there exists $j \leq u(n) < f(n)$ such that $\varphi_j (y) \conv$ and $\es^b \rstrd \varphi_j (y)$ suffices to run the calculations to determine if $n \in A$.

We observe $f(n) \in \es^{2b} \iff \exists i \leq f(n)
[\varphi_i(f(n)) \conv \ \wedge\ \Phi_{f(n)}^{\es^b \smrstrd
\varphi_i(f(n))} (f(n)) \conv]\\ \iff n \in A$.  Hence $f$
witnesses $A \leq_1 \es^{2b}$.
\end{proof}

We use a similar method to prove the statement for all $k$.

\begin{lemma} Let $k > 1$ and let $A$ be a set such that $A$ is $\omega^k$-c.e.  Then $A \leq_1 \es^{kb}$. \end{lemma}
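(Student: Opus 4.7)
The plan is to proceed by induction on $k \geq 2$, with base case $k = 2$ being Lemma~\ref{Erbase}. For the inductive step, fix $k \geq 2$, assume every $\omega^k$-c.e.\ set is $\leq_1 \es^{kb}$, and let $A$ be $\omega^{k+1}$-c.e.\ via a partial computable $\psi$. The idea is to add one outer layer to the construction of Lemma~\ref{Erbase}: the outer layer uses $\Sigma_1$ queries to find the ``correct level'' $i$, while the inner layer invokes the inductive hypothesis applied uniformly in $i$.

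Writing $\alpha < \omega^{k+1}$ as $\omega^k \cdot i + \beta$ with $\beta < \omega^k$, let $p(n)$ denote the least $i$ such that $\exists \beta < \omega^k\,[\psi(n, \omega^k \cdot i + \beta) \conv]$. As in Lemma~\ref{Erbase}, $p(n) \leq g(n)$ for a computable $g$ returning the first observed such $i$. The outer layer locates $p(n)$ via the bounded $\Sigma_1$ queries ``$\exists \beta < \omega^k\,[\psi(n, \omega^k \cdot i + \beta) \conv]$?'' for $i \leq g(n)$, each answered from $\es^b$ (hence from $\es^{kb}$) at positions computable in $n$. For the inner layer, observe that for each $i$ the partial computable map $\chi_i(n, \beta) := \psi(n, \omega^k \cdot i + \beta)$ provides an $\omega^k$-c.e.-style approximation whose index is uniform in $i$; when $i = p(n)$ the approximation is total in the $\omega^k$-c.e.\ sense and its least converging $\beta$ yields $A(n)$. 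Tracing through the constructive proof of the inductive hypothesis, one obtains a computable $h(n, i)$ such that for every $n$ with $i \geq p(n)$, $h(n, i) \in \es^{kb} \iff \psi(n, \omega^k \cdot i + \beta_0) = 1$, where $\beta_0$ is the least converging $\beta$. Combining the two layers via $s$-$m$-$n$ and padding, I define $f(n)$ so that $\Phi_{f(n)}^{\es^{kb}}(f(n))$ first computes $p(n)$ by bounded $\es^b$-queries and then consults $\es^{kb}$ at $h(n, p(n))$, halting iff $n \in A$. The total use of $\es^{kb}$ is bounded by a partial computable function whose index is made $\leq f(n)$ by padding, so $f$ witnesses $A \leq_1 \es^{(k+1)b}$.

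The main obstacle is verifying a uniform version of the inductive hypothesis --- that its reduction depends computably on the index of the $\omega^k$-c.e.\ approximation and remains correct at inputs where the approximation is only partially total. This is natural from the constructive proof by induction (at each inductive step one builds an explicit reduction whose index is effectively obtained from the approximation's index), but must be verified by inspection. Once this uniformity is in hand, the delicate alternative of trying to define a single, globally $\omega^k$-c.e.\ set $B$ with $B(\langle n, p(n)\rangle) = A(n)$ --- which is non-trivial because for $i < p(n)$ the natural approximation $\chi_i(n, \cdot)$ diverges everywhere --- is avoided entirely: the reduction $h$ is only consulted at the pair $(n, p(n))$, where the approximation is total, and the uniform inductive hypothesis guarantees correctness there. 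The remaining bookkeeping of index bounds to fit the $\es^{(k+1)b}$ template is routine and handled exactly as in Lemma~\ref{Erbase}.
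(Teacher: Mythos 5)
Your proposal follows essentially the same route as the paper: induct on $k$ with base case Lemma~\ref{Erbase}, locate the level $p(n)$ via bounded $\Sigma_1$ queries to $\es^b$, and uniformly apply the inductive hypothesis to the restriction $\chi_i(n,\cdot)=\psi(n,\omega^k\cdot i+\cdot)$, querying $\es^{kb}$ only at the pair $(n,p(n))$, where the least convergent $\beta$ indeed returns $A(n)$. The paper's proof handles the partiality of $\chi_i$ by explicitly patching it to a genuine $\omega^k$-c.e.\ witness $\tilde{\chi}$ (equal to $\chi_i$ at $n$ and $0$ elsewhere) before invoking the uniform inductive hypothesis, which is exactly the verification step you flag as needing inspection.
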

\begin{proof} We prove the statement by induction on $k$.  The base case ($k=2$) is given by Lemma \ref{Erbase}.  For the inductive case, we assume the statement holds for $k$ and wish to show it holds for $k+1$.  We note for the procedure given in Lemma \ref{Erbase} that an index for $f$ can be computed uniformly from an index for $\psi$.

The proof for the inductive case proceeds along the same lines as the proof for the base case.  Let $\psi$ witness that $A$ is $\omega^{k+1}$-c.e.  Let $g$ be a computable function such that $g(n) = i$ where the first time we observe $\psi(n,\alpha) \conv$ is $\omega^k \cdot i + \alpha$ for some $\alpha < \omega^k$.  Let $p(n)$ be the least $i$ such that for some $\alpha$ we have $\psi(n,\omega^k \cdot i + \alpha)\conv$.

Let $\chi_i (n, \alpha) = \psi (n, \omega^k \cdot i + \alpha)$ for all $\alpha < \omega^k$.  We define a partial computable sequence of functions $e_i (n)$ as follows.  To compute $e_i (n)$ we first search for any $\alpha$ such that $\chi_i (n, \alpha) \conv$.  If there is none then we must have $e_i (n) \dive$.  If the search halts then let
$$\tilde{\chi} (m, \alpha) = \begin{cases} \chi_i(m, \alpha) & \; m=n\\0 & \; \text{else} \end{cases}$$
Let $B$ be such that $\tilde{\chi}$ witnesses $B$ is $\omega^k$-c.e.\ and let $\tilde{f}$ be given by applying the induction hypothesis to $B$.  We then let $e_i (n) = \tilde{f} (n)$.

Let $v$ be a computable function defined by $\varphi_{v(i,n)} (y) = e_i (n)$ ($y$ is a dummy variable).  Let $u(n) = \max \{v(i,n) \setsep i \leq g(n) \}$.  The function $u$ is computable and if we let $j = v(p(n),n)$ then $j \leq u(n)$, the function $\varphi_j (y) \conv$, and $\es^{kb} \rstrd \varphi_j (y)$ suffices to determine $e_{p(n)}(n)$ and hence if $n \in A$ (for any $y$).

We define $f(n) > u(n)$ to be such that (for any $y$), $\Phi^{\es^{kb}}_{f(n)}(y)$ calculates if $n \in A$, and converges iff $n \in A$.  Explicitly, we define $f(n) > u(n)$ such that $\Phi^C_{f(n)} (y)$ is the partial computable function determined by the following steps.  Let $l$ be such that $\Phi^{\es^{kb}}_l (m) = p(m)$.  We then have $\Phi_{f(n)}^C (y)$ converge iff $\Phi^C_l (n)$ converges and $e_{\Phi^C_l (n)}(n) \in C$.  As in the proof of Lemma \ref{Erbase}, $f$ witnesses  $A \leq_1 \es^{(k+1)b}$, completing the induction.

Therefore for all $k \geq 2$ we have $A$ is $\omega^k$-c.e.\ $\Rightarrow A \leq_1 \es^{kb}$.
\end{proof}

We proved earlier that for any set $A$ we had $A^{b_0} \leq_1 A^b$ and $A^b \leq_{tt} A^{b_0}$.  However, we can use the results above to show that $A^b$ and $A^{b_0}$ are not always 1-equivalent.

\begin{remark} $\es^{2b} \not\leq_1 (\es^b)^{b_0}$.  \label{notoneequiv} \end{remark}
\begin{proof} Suppose $\es^{2b} \leq_1 (\es^b)^{b_0}$.  Let $A$ be a properly $\omega^2$-c.e.\ set.  By Theorem \ref{ErshovThm}, we have $A \leq_1 \es^{2b}$ so $A \leq_1 (\es^b)^{b_0}$.

Using an argument similar to the proof of Lemma
\ref{jumpErshov}, we can show that $(\es^b)^{b_0}$ is
$(\omega+1)$-c.e. Indeed, while $\varphi_i(j)\up$ we believe
$\la e, i, j \ra \nin (\es^b)^{b_0}$, and once $\varphi_i(j)
\dn$, since $\es^b$ is \ce, we can approximate
$(\es^b)^{b_0}(\la e, i, j \ra)$ with at most $2(\varphi_i(j))$
many changes. It is not hard to see that for any sets $B$ and $C$
and any ordinal $\alpha$, if $B \leq_1 C$ and $C$ is
$\alpha$-c.e., then $B$ is $\alpha$-c.e. Hence $A$ is
$(\omega+1)$-c.e., contradicting $A$ being properly
$\omega^2$-c.e.  We conclude $\es^{2b} \not\leq_1
(\es^b)^{b_0}$.
\end{proof}

\section{Inversions}

We examine what type of inverses exist for the bounded jump.  Anderson \cite{andtt} proved that strong jump inversion holds for the truth-table degrees.  For any set $X \geq_{tt} \es^\prime$ there is a set $Y$ such that $X \equiv_{tt} Y^\prime \equiv_{tt} Y \oplus \es^\prime$.  It follows as a corollary that strong bounded jump inversion holds for the truth-table degrees.

\begin{cor} Let $X \geq_{tt} \es^b$.  Then there exists $Y$ such that $Y^b \equiv_{tt} X \equiv_{tt} Y \oplus \es^b$.  \label{bounded sji} \end{cor}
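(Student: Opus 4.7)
The plan is to deduce this directly from Anderson's truth-table strong jump inversion. Since $\es^b \equiv_1 \es^\prime$, the hypothesis $X \geq_{tt} \es^b$ is equivalent to $X \geq_{tt} \es^\prime$, so applying Anderson's theorem to $X$ yields a set $Y$ with $Y^\prime \equiv_{tt} X \equiv_{tt} Y \oplus \es^\prime$.

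With this $Y$ chosen, the only remaining task is to place $Y^b$ inside the same tt-equivalence class. I plan to do this by sandwiching $Y^b$ tt-between $Y \oplus \es^\prime$ and $Y^\prime$, both of which are already tt-equivalent to $X$. The upper bound is immediate from item 3 of the Properties section, which gives $Y^b \leq_1 Y^\prime \equiv_{tt} X$. For the lower bound, I combine item 2 ($Y \leq_1 Y^b$) with item 4 ($\es^\prime \leq_{tt} Y^b$) componentwise: on even inputs $2k$ use the 1-reduction witnessing $Y \leq_1 Y^b$, and on odd inputs use the tt-reduction of $\es^\prime$ to $Y^b$. This gives $Y \oplus \es^\prime \leq_{tt} Y^b$, and chaining the reductions yields
$$X \equiv_{tt} Y \oplus \es^\prime \leq_{tt} Y^b \leq_{tt} Y^\prime \equiv_{tt} X,$$
so $Y^b \equiv_{tt} X$.

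The remaining identity $X \equiv_{tt} Y \oplus \es^b$ comes for free, since $\es^b \equiv_1 \es^\prime$ gives $Y \oplus \es^b \equiv_{tt} Y \oplus \es^\prime \equiv_{tt} X$. There is no real obstacle in this argument; the corollary is essentially a bookkeeping exercise once one observes that the bounded jump $Y^b$ is tt-sandwiched between $Y \oplus \es^\prime$ and $Y^\prime$ via the basic properties of the bounded jump already recorded, with all of the difficult work already done inside Anderson's theorem.
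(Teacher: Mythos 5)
Your proof is correct and follows essentially the same route as the paper: invoke Anderson's truth-table strong jump inversion to get $Y$ with $Y' \equiv_{tt} X \equiv_{tt} Y \oplus \es'$, then sandwich $Y^b$ between $Y \oplus \es'$ and $Y'$ using the facts $Y \leq_1 Y^b$, $\es' \leq_{tt} Y^b$, and $Y^b \leq_1 Y'$ from Section~\ref{Properties}. The only difference is that you spell out the componentwise combination of properties 2 and 4 to get $Y \oplus \es' \leq_{tt} Y^b$, which the paper leaves implicit.
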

\begin{proof} Let $X$ be given and let $Y$ be given by strong jump inversion for the truth-table degrees.  Then $Y ^\prime \equiv_{tt} X \equiv_{tt} Y \oplus \es^b$ and from section \ref{Properties} we have $Y \oplus \es^b \leq_{tt} Y^b$ and $Y^b \leq_{tt} Y^\prime$.  We conclude $Y^b \equiv_{tt} X \equiv_{tt} Y \oplus \es^b$.
\end{proof}

A close examination of the proof in \cite{andtt} reveals that an equivalent statement also holds for the bounded Turing degrees.  For any set $X \geq_{bT} \es^\prime$ there is a set $Y$ such that $X \equiv_{bT} Y^\prime \equiv_{bT} Y \oplus \es^\prime$.  If we apply the proof of Corollary \ref{bounded sji}, we get that for any set $X \geq_{bT} \es^b$ there is a set $Y$ such that $X \equiv_{bT} Y^b \equiv_{bT} Y \oplus \es^b$.

As noted earlier, Shoenfield jump inversion \cite{ShoenfieldInv} holds for the Turing degrees with the Turing jump, for every $\Sigma_2$ set $X \geq_T \es^\prime$ there is a $Y \leq_T \es^\prime$ such that $Y^\prime \equiv_T X$.  Csima, Downey, and Ng \cite{CsimaDowneyNg} showed that it does not hold for the bounded Turing degrees with the Turing jump.

We prove that Shoenfield jump inversion holds for the bounded Turing degrees with the bounded jump.  In this example, the behavior of the bounded jump on the bounded Turing degrees more closely resembles the behavior of the Turing jump on the Turing degrees.

\begin{theorem} Let $B$ be such that $\es^b \leq_{bT} B \leq_{bT} \es^{2b}$.
Then there is a set $A \leq_{bT} \es^b$ such that $A^b
\equiv_{bT} B$. \label{SchThm} \end{theorem}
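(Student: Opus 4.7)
The plan is to adapt classical Shoenfield jump inversion to the bounded-jump setting, exploiting the Ershov-hierarchy characterization from Theorem~\ref{ErshovThm}. The hypothesis $B \leq_{bT} \es^{2b}$ gives that $B$ is $\omega^2$-c.e., so I fix a partial computable witness $\psi : \omega \times \omega^2 \to \{0,1\}$ of this. I will construct a c.e.\ set $A$ (necessarily $\omega$-c.e., hence $A \leq_{bT} \es^b$ by Theorem~\ref{ErshovThm}) with $A^b \equiv_{bT} B$.

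Setup: by the $s$-$m$-$n$ theorem together with padding, for each $n$ reserve a coding index $c(n)$, coding positions $d(n,\beta)$ and $D(n,\beta)$ in $A$ for each $\beta < \omega^2$, and witness indices $e(n,r) \leq c(n)$. Design the functional $\Phi_{c(n)}^X(c(n))$ to search the ordinals $\beta < \omega^2$ in lex order, halting at the lex-least $\beta$ with $X(D(n,\beta)) = 1$ and converging iff $X(d(n,\beta)) = 1$. Each $\varphi_{e(n,r)}$ outputs an oracle-use bound, and for sufficiently large $r \leq c(n)$ this bound will cover the whole search up to $\beta_n := \min\{\beta : \psi(n,\beta)\dn\}$. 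In the construction, whenever $\psi_s(n,\beta)\dn$ is freshly observed, enumerate $D(n,\beta)$ into $A$ and additionally $d(n,\beta)$ iff $\psi_s(n,\beta) = 1$. Every $A$-position is enumerated at most once, so $A$ is c.e. Since $\beta_n$ is the lex-least $\beta$ with $D(n,\beta) \in A$ and $d(n,\beta_n) \in A \iff B(n) = 1$, we obtain $c(n) \in A^b \iff B(n) = 1$, giving $B \leq_{bT} A^b$ via the computable map $n \mapsto c(n)$.

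The main technical obstacle is the reverse reduction $A^b \leq_{bT} B$. At a coding index $m = c(n)$ this is immediate: we query $B(n)$, with use $\leq c(n)$. At a non-coding $m$, however, $A^b(m)$ depends on the computations $\Phi_m^{A \upharpoonright \varphi_i(m)}(m)$ for $i \leq m$, and $\varphi_i(m)$ may grow arbitrarily in $m$, so the relevant portion of $A$ is a priori unbounded. The way around this is that $A$ is supported only on the (sparse) coding positions $d(n,\beta), D(n,\beta)$, and each such bit is $B$-computable with individual use bounded by a computable function of its index, since $\es^b \leq_{bT} B$. By placing the coding positions suitably sparsely and bookkeeping the $\Sigma_1$-queries ``$\Phi_m^\sigma(m) \dn$?'' against $\es^b \leq_{bT} B$, one reduces the computation of $A^b(m)$ to $B$-queries whose combined use is bounded by a computable function of $m$ alone, independent of the unbounded witness values $\varphi_i(m)$.

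Carrying out this bookkeeping is where the technical core of the proof lies: in particular, one must ensure that the internal search inside $\Phi_{c(n)}^X$ can be realized as a genuine (monotone) Turing functional and that the coding positions $d(n,\beta), D(n,\beta)$ together with the witness functions $\varphi_{e(n,r)}$ are placed so that the oracle uses combine to yield a global computable bound in $m$. The hard part will be making the second direction actually go through with a computably bounded use, while keeping the c.e.\ enumeration of $A$ faithful to the two-level $\omega^2$-c.e.\ structure of $\psi$.
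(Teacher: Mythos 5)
There is a genuine gap at the heart of the coding step. You propose a c.e.\ set $A$ and a functional $\Phi_{c(n)}^X(c(n))$ that ``searches the ordinals $\beta < \omega^2$ in lex order, halting at the lex-least $\beta$ with $X(D(n,\beta)) = 1$.'' No such Turing functional exists: $\omega^2$ has order type strictly greater than $\omega$, so if $\beta_n \geq \omega$ (that is, $\psi(n,\cdot)$ never converges at level $\omega\cdot 0$), a lex-order scan through $\omega\cdot 0 + 0,\ \omega\cdot 0 + 1,\ \ldots$ never reaches $\beta_n$ and the functional diverges regardless of $B(n)$. Equivalently, certifying that no $\beta$ at a given $\omega$-level has $X(D(n,\beta))=1$ is a $\Pi_1(X)$ fact and cannot sit inside a single converging $\Sigma_1(X)$ computation. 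This is precisely the source of non-monotonicity in $\omega^2$-c.e.\ approximations: the current guess for the least $\beta$ with $\psi(n,\beta)\conv$ can drop to a strictly lower $\omega$-level at a later stage, so the bit coding $B(n)$ into $A^b$ must be \emph{retractable} --- something a c.e.\ $A$ cannot support. Indeed the paper explicitly leaves open whether Theorem~\ref{SchThm} can be achieved with $A$ c.e., so a working c.e.\ construction along your lines would be a new, strictly stronger result rather than a reproof.

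The paper handles the non-monotonicity by taking $A$ to be only $\omega$-c.e.: movable markers $x_n^i$ track the currently active $\omega$-level for $n$, and when a lower level is discovered the old marker is \emph{extracted} from $A$ and a fresh one defined, with $\Phi_{g(n)}$ and the witness functions $\varphi_{k(n,r)}$ extended by the Recursion Theorem so that only the surviving marker yields a convergent computation against the final $A$. A second, independent extraction mechanism (Step 1), triggered each time some new $\varphi_e(x)\conv$ with $e \leq x \leq g(k)$ appears, is what keeps the set of oracle strings that could ever witness $x \in A^b$ computably small, and is the engine behind $A^b \leq_{bT} B$ with computably bounded use --- the direction you yourself flag as ``the hard part'' and leave unresolved. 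Both extraction mechanisms are unavailable to a c.e.\ construction, so the sparsity/bookkeeping sketch you give for the reverse reduction is not merely unfinished: the unbounded $\varphi_i(m)$ genuinely makes the use of $B$ depend on $\varphi_i(m)$ rather than on $m$, and without extraction there is no bound on the number of candidate halting oracles for $\Phi_m$ either.
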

\begin{proof}

Suppose $\es^b \btl B \btl \es^{2b}$. Let $\psi$ witness that
$B$ is $\omega^2$-c.e. We build an $\omega$-\ce set $A$ (so $A
\btl \es^b$) such that $A^b \equiv_{bT} B$.

We will define $A$ using a stage by stage construction. We will ensure that $A$ is
$\omega$-\ce via the function $f(x)= x+1$. Before we start, we define a computable
function $g$. We will have $g$ witness that $B \leq_1 A^b$.

For each $n \in \omega$, let $i_n$ be the first $i$ that we find such that $\psi(n, \omega \cdot i + j)\conv$, 
for some $j$.  The definition of $\psi$ guarantees such an $i$ exists, so the $i_n$ are uniformly computable.  We 
define an approximation $B_s$ for $B$ similarly.  Fix $n$ and let $t$ be least such that $\psi_t (n, \alpha) \conv$, 
for some $\alpha$.  Given $s$, let $\tilde{s} = \max(t,s)$ and let $B_s (n) = \psi (n, \beta)$, where $\beta$ is least 
such that $\psi_{\tilde{s}} (n, \beta) \conv$.

We define a computable function $h$ to help define $g$. Let
$g(-1) = -1$. Let $h(n)= \Sigma^{n-1}_{k=0} h(k) + \Sigma^{g(n-1)}_{k=1}(\frac{k^2 -k}{2}) + i_{n}$.
Let $g(n)$ be such that between $g(n-1)$ and $g(n)$ there are $h(n)$-many
partial computable functions $\varphi_{k(n,0)},\ \ldots\ ,$\\$\varphi_{k(n, h(n) - 1)}$
that we control by the recursion
theorem, and such that we control $\Phi_{g(n)}$ by the
recursion theorem.  The formal definitions of $g$, $h$, and $k$ are 
given at the end of the proof in Lemma \ref{ghlem}.

We will make use of markers labeled $x_n^i$ with $i \leq i_n$,
called \emph{$n$-markers}, which will move stage by stage, but
reach a limit. At some stage $s$, we might say that a marker
$x_n^i$ becomes \emph{defined}. The marker then maintains its
value, unless it becomes \emph{undefined} at a later stage. If
it at an even later stage becomes redefined, then it will have
a new, larger, value. At any moment, there will be at most one
$n$-marker defined for each $n$. There will be a computable
bound on the total number of times all $n$-markers will be
defined/redefined, namely $h(n)$.

In each stage of the construction we will make numerous changes
to the approximation of the set $A$. To ease notation, when we
write ``$A$" in the construction, we actually mean the most
current approximation of $A$ at that moment of the
construction. By ``$A_s$" we mean the approximation $A$ at the
\emph{end} of stage $s$.  Without loss of generality, we assume
that if $\psi_s (n,\omega \cdot i + j)\conv$ then the stage
$s > j + 1$.

\emph{Stage $s$:}

\emph{Step $1$:} If some $\varphi_{e,s}(x)\conv$ for the first
time at stage $s$, with $e \leq x \leq g(k)$, then for all $m
>k$, extract all $x^l_m$ from $A$, and declare them
undefined.

\emph{Step $2$:} Let $n \leq s$ be least such that $x^i_n$ is
defined, but $A(x^i_n) \not= B_s(n)$, or such that $\psi(n,
\alpha) \conv$ for some $\alpha$ but there is no marker defined
for $n$. Let $\omega \cdot i + j$ be least such that $\psi_s
(n, \omega \cdot i + j ) \conv$.

(a) If $x_n^i$ is undefined, then we perform the following steps.
Define $x_n^i = s$.  Extract all $x^l_m$ with $m >n$ and all
$x^k_n$ with $k >i$ from $A$, and declare them undefined. Define
$\varphi_{k(n,r)}(g(n))\conv = x^i_n$ for some $r$, and declare
$\Phi_{g(n)}^{\sigma \concat 1}(g(n)) \conv$ for every string
$\sigma$ of length $x^i_n - 1$. Note that by our assumption,
$j+1 < s =x_n^i$. There will always be some $r$ with
$\varphi_{k(n,r)}(g(n))[s-1] \dive$ by our careful counting of $h(n)$.

(b) If needed, change $A_s (x_n^i)$ to ensure $x_n^i \in A_s$ iff
$n \in B_s$ (so that $g(n) \in A^b_s$ iff $n \in B_s$).

This completes the construction.

\begin{lemma}
$A$ is $\omega$-\ce
\end{lemma}
\begin{proof}
If at stage $s$ we did not set $s = x^i_n$ for any $n$, then
$s$ was never enumerated into $A$. If at stage $s$ we set $s=
x^i_n$, then $x^i_n$ is enumerated into $A$, and can be
removed/enumerated into $A$ at most $j$-many more times by Step
$1$ of the construction (where $j$ is least such that
$\psi_s(n, \omega \cdot i +j) \conv$). By convention $j \leq s$,
so certainly $s$ is enumerated/removed from $A$ at most
$s+1$-many times.
\end{proof}
\begin{lemma}
For each $n$ and each $i \leq i_n$, $x^i_n = \lim x^i_n[s]$
exists, where we allow ``undefined" as a possibility. Moreover,
for each $n$, if $\tilde{\imath}_n=\mu i (\exists j)[\psi(n, \omega
\cdot i +j) \conv]$ then $x^i_n$ is defined iff $i =
\tilde{\imath}_n$, and $x^{\tilde{\imath}_n}_n \in A \iff n \in B$.
Finally, for each $n$, the total number of times any $n$-marker
is defined or redefined, summing over all $i \leq i_n$, is at most $h(n)$.
\end{lemma}
\begin{proof}
An $n$-marker $x^i_n$ can only become defined (re-defined) via
step 2a of the construction. Thus at the stage when $x^i_n$ is
defined (re-defined), $i$ is least such that $\psi_s(n, \omega
\cdot i +j)\conv$. At the moment that $x^i_n$ is defined
(re-defined), any $x^k_n$ with $k>i$ that may have been defined
is undefined, and since $k>i$, will never be re-defined at a
later stage. That is, at any stage of the construction, there
is at most one $i$ with $x^i_n$ defined, and, as a function of
the stages, the index $i$ of the $n$-markers that are defined
is non-increasing. Since there is only one defined $n$-marker
at any given stage, the total number of times that an
$n$-marker is undefined by Step 1 of the construction is
bounded by $\Sigma^{g(n-1)}_{k=1}(\frac{k^2 -k}{2})$. Let
$\hat{h}(m)$ be the total number of stages where an $m$-marker
is defined (re-defined). A $0$-marker cannot be undefined by
step 1. In step 2, a $0$ marker can only be undefined if a new
$0$-marker, with lower index, is defined. Thus $\hat{h}(0) =
i_0=h(0)$. Similarly, $\hat{h}(n) =
\Sigma^{g(n-1)}_{k=1}(\frac{k^2 -k}{2}) + i_n +
\Sigma_{k=0}^{n}h(k) = h(n)$.

Finally, consider $x^{\tilde{\imath}_n}_n$. Let $s$ be a stage by
which all $m$-markers with $m \leq n$ have reached their
limits, and such that $A_t \rstrd x^{\tilde{\imath}_n}_n = A_s \rstrd
x^{\tilde{\imath}_n}_n$ for all $t \geq s$. Note that by definition
of $\tilde{\imath}_n$, we have that $x^{\tilde{\imath}_n}_n$ is defined
at stage $s$. Then by step 2b of the construction we have that
$x^{\tilde{\imath}_n}_n \in A$ iff $n \in B$.
\end{proof}

\begin{lemma} $B \leq_1 A^b$
\end{lemma}
\begin{proof}
We claim that $n \in B$ iff $g(n) \in A^b$. Consider the stage
$s$ when $x^{\tilde{\imath}_n}_n$ was defined for the last time. At
this stage, we set $\varphi_{k(n,r)}(g(n))\conv =
x^{\tilde{\imath}_n}_n$ for some $r$, and declare
$\Phi_{g(n)}^{\sigma \concat 1}(g(n)) \conv$ for all $\sigma$ of length
$x_n^{\tilde{\imath}} - 1$.  Since $k(n,r) < x^{\tilde{\imath}_n}_n$,
we have that if $x^{\tilde{\imath}_n}_n \in A$ then $g(n) \in A^b$.
Conversely, we only ever define $\Phi_{g(n)}(g(n))$ to halt in Step 2a
of the construction, and with an oracle that includes an $n$-marker.
Since all $n$-markers besides $x^{\tilde{\imath}_n}_n$ were
extracted from $A$ at stage $s$, we have that if
$x^{\tilde{\imath}_n}_n \nin A$ then $g(n) \nin A^b$. Now by the
previous lemma we have $x^{\tilde{\imath}_n}_n \in A$ iff $n \in B$,
so that $n \in B$ iff $g(n) \in A^b$ as desired.
\end{proof}

\begin{lemma}
$A^b \btl B$
\end{lemma}
\begin{proof}
Recall $x \in A^b \iff \exists e \leq x [\Phi_x^{A\smrstrd
\varphi_e(x)}(x) \conv]$. Recall also that $\es^b \btl B$. Let
$n$ be least such that $x < g(n)$.

Let $k$ be the total number of different oracles that appear to
witness $x \in A^b$ during the approximation of $A$. That is,
$k$ is maximal such that \begin{equation}\label{equ-search for
k} \exists s_1 ... \exists s_k \exists \sigma_1...\exists
\sigma_k (\sigma_i \not= \sigma_j \wedge \exists e \leq x
[\varphi_{e,s_i}(x) \conv \wedge \sigma_i = A_{s_i} \rstrd
\varphi_{e,s_i}(x)] \wedge \Phi_{x, s_i}^{\sigma_i} (x) \conv).
\end{equation}

According to step 1 of the construction, whenever some
$\varphi_e(x)\conv$ with $e \leq x$, all $m$-markers with
$m>g(n)$ are extracted from $A$. So, if $x \in A^b$, then the
only non-zero entries in the part of the oracle $A$ that is
used in the computation are those that arise from $m$-markers
with $m \leq n$. Since the total number of times $m$-markers
can be redefined is bounded by $h(m)$, and since each marker
can either be in or out of $A$, the number $k$ of possible
oracles is computably bounded (it is certainly bounded by
$2^{\Sigma_{l=0}^n h(l)}$). That is, we can bT compute $k$ from
$\es^b$ and hence $B$ using questions of the form
(\ref{equ-search for k}).

For each $m \leq n$, using at most $i_m$-many questions of the
form\\$(\exists x_1)...(\exists x_l) [x_{p+1} < x_p \wedge
(\exists j)\psi(m, \omega \cdot x_p +j ) \conv]$, we can bT
compute $\tilde{\imath}_m$ from $\es^b$ and hence $B$.

Similarly, we can $bT$ compute from $\es^b$, and hence from
$B$, the number of pairs $e \leq y \leq g(n)$ such that
$\varphi_e(y)\conv$. Thus we can $bT$ compute from $B$ the stage
$s$ by which point if $e \leq y \leq g(n)$ and $\varphi_e(y)
\conv$ then $\varphi_{e,s}(y) \conv$.

We can certainly $bT$-compute from $B$ the initial segment
$B\rstrd n$.

We now put the above facts together to compute whether $x \in
A^b$. If $k = 0$, then there is never any stage where it
appears that $x \in A^b$, so $x \nin A^b$. So suppose $k \not=
0$. Run the approximation of $A$ to find the $k$-many different
possible oracles which might witness $x \in A^b$. We know that
the only possible non-zero entries in the correct oracle come
from $x^{\tilde{\imath}_m}_m$ with $m \leq n$, and that
$x^{\tilde{\imath}_m}_m \in A$ iff $m \in B$. Now since we have
$bT$-computed from $B$ all the $\tilde{\imath}_m$ for $m \leq n$, we
can run the approximation of $A$ until the least stage $t$
greater than $s$ where markers of the form $x^{\tilde{\imath}_m}_m$
are defined for all $m \leq n$. The location of
$x^{\tilde{\imath}_m}_m$ at stage $t$ is its final location. Now,
using $B\rstrd n$, we have computed the true initial segment of
$A$ that is relevant for deciding whether $x \in A^b$. If this
oracle extends any of the $k$-many halting oracles that we
found, then $x \in A^b$. Otherwise, $x \nin A^b$.

\end{proof}

\begin{lemma} The functions $g$, $h$, and $k$ used in the construction exist. \label{ghlem} \end{lemma}
\begin{proof} Let $\Psi_{m,n,q}$ and $\Gamma_{n,q}$ denote the operations that are referred to in the main construction as $\varphi_{k(m,n)}$ and $\Phi_{g(n)}$, respectively, when the role of $g(n)$ in the construction (when not in the form $\Phi_{g(n)}$) is played by $\varphi_q (n)$.  We wish to find $g$, $h$, and $k$ which satisfy the less than and greater than constraints in the main proof, and a number $i$ such that $\varphi_{k(m,n)} = \Psi_{m,n,i}$, $\Phi_{g(n)} = \Gamma_{n,i}$, and $g(n) = \varphi_i (n)$ for all $m,n$.

By the padding lemma, for each $n,m,q$ let $K_{n,m,q}$ be an infinite, uniformly computable set such that for all $l \in K_{n,m,q}$ we have $\varphi_l = \Psi_{n,m,q}$.  Similarly, for all $n,q$ let $G_{n,q}$ be an infinite, uniformly computable set such that for all $l \in G_{n,q}$ we have $\Phi_l = \Gamma_{n,q}$.

We now define a uniformly computable procedure (in a parameter $q$) which we will label $\Theta_q$.  The procedure will use simultaneous induction to define three computable functions, $\tilde{g}$, $\tilde{h}$, and $\tilde{k}$.

We start the procedure by saying $\tilde{g}(-1) = -1$.  Given $\tilde{g}$ and $\tilde{h}$ up to $n-1$, we define $\tilde{h} (n)$ as we did in the main theorem, $\tilde{h}(n)= \Sigma^{n-1}_{t=0} \tilde{h}(t) + \Sigma^{\tilde{g}(n-1)}_{t=1}(\frac{t^2 -t}{2}) + i_{n}$.  Next, for each $m$ such that $0 \leq m < \tilde{h}(n)$ we assign the least possible element of $K_{n,m,q}$ as the value of $\tilde{k}(n,m)$ such that we satisfy $\tilde{g}(n-1) < \tilde{k}(n,0) < \tilde{k}(n,1) < \ldots < \tilde{k}(n,\tilde{h}(n) - 1)$.  Finally we assign the least element of $G_{n,q}$ bigger than $\tilde{k}(n,\tilde{h}(n) - 1)$ as the value of $\tilde{g}(n)$.  This completes our induction, and the procedure $\Theta_q$.

We note that if $\tilde{g}$, $\tilde{h}$, and $\tilde{k}$ come from procedure $\Theta_q$ then they meet the less than and greater than constraints in the main proof, and for all $m,n$ we have $\varphi_{\tilde{k}(n,m)} = \Psi_{n,m,q}$ and $\Phi_{\tilde{g}(n)} = \Gamma_{n,q}$.

Define a computable, injective function $w$ by letting $\varphi_{w(q)} (x) = \tilde{g} (x)$ where $\tilde{g}$ comes from procedure $\Theta_q$.  Let $i$ be given by the Recursion Theorem applied to $w$.  Finally, let $g$, $h$, and $k$ be given by $\tilde{g}$, $\tilde{h}$, and $\tilde{k}$ from procedure $\Theta_i$.  Then $\varphi_{k(m,n)} = \Psi_{m,n,i}$, $\Phi_{g(n)} = \Gamma_{n,i}$, and $\varphi_i = \varphi_{w(i)} = g$, as desired.
\end{proof}

\end{proof}

We note the proof above cannot be modified to find an $A$ such
that $A^b \leq_{tt} B$.

\section{Other jump operators}\label{sec:other jump operators}

In 1979, Gerla \cite{Gerla} proposed jump operators for the truth-table and bounded truth-table degrees.  We wish to compare his observations on these operators with some of the results shown so far for the bounded jump.  Since the original article is available only in Italian, we briefly summarize the definitions and highlight a few of the results from the paper.

We start with some basic definitions used in studying the truth-table degrees (see Rogers \cite{Rogers}).

\begin{definition} A $tt$-condition is a finite sequence $x_1 \ldots x_k \in \omega$ and a function $\alpha: 2^k \to 2$.  We say it is satisfied by $A$ if $\alpha(A(x_1) \ldots A(x_k)) = 1$.  We define $A^{tt} = \{x \setsep x$ is a $tt$-condition satisfied by $A \}$.
\end{definition}

We note that $A^{tt} \leq_{tt} A$ and $A \leq_1 A^{tt}$.  Gerla \cite{Gerla} uses $A^{tt}$ to define jumps $A_{tt}$ and $A_{bk}$ for the truth-table degrees and bounded truth-table degrees of norm $k$, respectively.

\begin{definition} $A_{tt} = \{ x \setsep \varphi_x (x) \conv \in A^{tt} \}$.  $A_{bk} = \{ x \setsep \varphi_x (x) \conv \in A^{tt}\ \wedge\ \varphi_x(x) \leq k \}$. \end{definition}

The behavior of $A_{tt}$ and $A_{bk}$ on the truth-table and bounded truth-table degrees shares several similarities with that of $A^\prime$ on the Turing degrees.  We state a few of the many results below.

\begin{theorem}[Gerla \cite{Gerla}] Let $k$ be a number and let $A$ and $B$ be sets.
\begin{enumerate}
\item{$A_{tt} \not\leq_{tt} A$.  $A_{bk} \not\leq_{bk} A$.}
\item{$A \leq_{tt} B \Rightarrow A_{tt} \leq_1 B_{tt}$.}
\item{$A <_1 A_{bk} \leq_1 A_{b(k+1)} \leq_1 A_{tt} \leq_1 A^\prime$.}
\item{$\es_{bk} \equiv_1 \es_{tt} \equiv_1 \es^\prime$.}
\end{enumerate}
\end{theorem}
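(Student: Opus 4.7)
All four parts adapt familiar facts about the Turing jump to the $tt$-setting, and my overall plan is to treat $A^{tt}$ as a cylinder that makes $tt$-reductions behave like $m$-reductions, and then mimic the standard Turing-jump diagonalization and $s$-$m$-$n$ constructions. For part~(1), I would run a diagonal argument parallel to Theorem~\ref{strinc}. Any $tt$-reduction from $A_{tt}$ to $A$ factors through $A^{tt}$ and yields a total computable $c$ with $x \in A_{tt} \iff c(x) \in A^{tt}$. Apply the recursion theorem to find $e$ with $\varphi_e(e)$ equal to the negation of the $tt$-condition $c(e)$; negating a truth-table is effective and preserves the number of queries. Then $e \in A_{tt} \iff \varphi_e(e) \in A^{tt} \iff c(e) \notin A^{tt} \iff e \notin A_{tt}$, a contradiction. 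Since the norm is preserved, exactly the same construction gives $A_{bk} \not\leq_{bk} A$.

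Part~(2) is a direct $s$-$m$-$n$ translation. A $tt$-reduction witnessing $A \leq_{tt} B$ gives a total computable $c$ sending $A$-$tt$-conditions to $B$-$tt$-conditions with $\tau \in A^{tt} \iff c(\tau) \in B^{tt}$. Use $s$-$m$-$n$ with padding to produce an injective computable $h$ satisfying $\varphi_{h(x)}(y) = c(\varphi_x(x))$ whenever $\varphi_x(x)\conv$; substituting $y = h(x)$ gives $h(x) \in B_{tt} \iff \varphi_x(x) \in A^{tt} \iff x \in A_{tt}$, so $A_{tt} \leq_1 B_{tt}$.

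Part~(3) splits into four small ingredients. For $A_{bk} \leq_1 A_{b(k+1)}$, use $s$-$m$-$n$ and padding to define an injection $h$ with $\varphi_{h(x)}(y) = \varphi_x(x)$ exactly when $\varphi_x(x)\conv$ codes a $tt$-condition of norm at most $k$; then $h(x) \in A_{b(k+1)} \iff x \in A_{bk}$. The same trick, with norm bound $k{+}1$ (or no bound), gives $A_{b(k+1)} \leq_1 A_{tt}$. For $A \leq_1 A_{bk}$, build an injection $f$ via $s$-$m$-$n$ plus padding so that $\varphi_{f(x)}(f(x))$ is always a norm-$1$ $tt$-condition querying position $x$ with the identity truth-table, so $f(x) \in A_{bk} \iff A(x) = 1$. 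For $A_{tt} \leq_1 A'$, observe that $A_{tt}$ is uniformly c.e.\ in $A$ (enumerate $x$ as soon as $\varphi_x(x)\conv$ and the resulting $tt$-condition is seen to hold of $A$), while $A'$ is $1$-complete for sets c.e.\ in $A$. Strictness $A <_1 A_{bk}$ then follows from~(1): since $\leq_1$ refines $\leq_{bk}$ for $k \geq 1$, a $1$-reduction $A_{bk} \leq_1 A$ would force $A_{bk} \leq_{bk} A$, contradicting the bounded version of~(1).

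Part~(4) reduces to the chain $\es_{bk} \leq_1 \es_{tt} \leq_1 \es'$, which is the $A = \es$ case of~(3), together with $\es' \leq_1 \es_{bk}$. For the latter, fix a $tt$-condition $\tau$ of norm at most $k$ whose truth-table is constantly $1$, so $\tau \in D^{tt}$ for every set $D$, and in particular $\tau \in \es^{tt}$. Use $s$-$m$-$n$ with padding to build an injection $f$ such that $\varphi_{f(x)}(f(x))$ halts with value $\tau$ exactly when $\varphi_x(x)\conv$; then $x \in \es' \iff f(x) \in \es_{bk}$. The only real care needed anywhere is the bookkeeping that secures injectivity for the $\leq_1$ reductions, which is handled uniformly by padding; all the conceptual content sits in the diagonalization of part~(1), which is essentially a clone of the argument for Theorem~\ref{strinc}, and the main (mild) obstacle is simply fixing a coding of $tt$-conditions under which negation and norm-bounding are effective.
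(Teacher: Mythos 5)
The paper itself contains no proof of this theorem: it is quoted verbatim from Gerla's 1979 paper (which is in Italian) as background for comparing Gerla's $tt$- and $btt$-jumps with the bounded jump. So there is nothing in the text to compare your argument against, and I can only judge it on its own terms.

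On its own terms your proof is correct and is essentially the expected one. For part~(1), a $tt$- (respectively $bk$-) reduction from $A_{tt}$ to $A$ does indeed compose with the trivial reduction $A \leq_{tt} A^{tt}$ to give a total computable $c$ with $x \in A_{tt} \iff c(x) \in A^{tt}$, and the recursion-theorem fixed point $\varphi_e(e) = \neg c(e)$ (negation of a $tt$-condition is effective and norm-preserving) gives the contradiction. Part~(2) relies on the standard fact that $A \leq_{tt} B$ induces an effective map on $tt$-conditions with $\tau \in A^{tt} \iff c(\tau) \in B^{tt}$, and then $s$-$m$-$n$ plus padding does the rest; part~(3) is a chain of similar $s$-$m$-$n$ translations together with the observation that $A_{tt}$ is uniformly c.e.\ in $A$; and part~(4) is part~(3) at $A = \es$ together with an easy reduction $\es' \leq_1 \es_{bk}$ via a fixed tautological $tt$-condition. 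Two small things are worth flagging. First, parts~(3) and~(4) implicitly require $k \geq 1$: a norm-$0$ $tt$-condition queries nothing, so $A_{b0}$ is the same c.e.\ set for every $A$ and $A \leq_1 A_{b0}$ fails for non-c.e.\ $A$; your norm-$1$ identity-query reduction, and your claim that $\leq_1$ refines $\leq_{bk}$, both need $k \geq 1$. Second, the paper writes the side condition in the definition of $A_{bk}$ as ``$\varphi_x(x) \leq k$''; for the theorem to be true this must be read, as you do, as bounding the \emph{norm} of the $tt$-condition coded by $\varphi_x(x)$, not the numerical value of the code.
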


We demonstrated earlier the connection between the bounded jump and the Ershov hierarchy.  We see that the finite levels of the Ershov hierarchy share a similar (but weaker) connection with $A_{bk}$.

\begin{theorem}[Gerla \cite{Gerla}] Let $A$ be $n$-c.e.\ and let $B \leq_1 A_{bk}$.  Then $B$ is $(nk + 1)$-c.e. \end{theorem}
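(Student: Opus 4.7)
The plan is to construct, uniformly in $z$, a computable approximation of $A_{bk}(z)$ that changes at most $nk+1$ times, and then transport this to $B$ via the $1$-reduction. The only source of changes will be the convergence of $\varphi_z(z)$ and the values of the finitely many bits of $A$ that the resulting tt-condition queries; since each such bit can flip at most $n$ times, the total change count will be easy to bound.

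First, fix a computable approximation $\{a_s\}_{s \in \omega}$ of $A$ with $a_0 \equiv 0$, $\lim_s a_s(x) = A(x)$, and $|\{s : a_{s+1}(x) \neq a_s(x)\}| \leq n$ for every $x$. For each $z$, define an approximation $b_s(z)$ as follows. Put $b_s(z) = 0$ at every stage until a stage $s_0$ (if any) arises at which $\varphi_{z,s_0}(z) \conv$ to a value $\leq k$. Once such an $s_0$ appears, read off the tt-condition coded by $\varphi_z(z)$, say with queries $x_1, \ldots, x_{k'}$ for some $k' \leq k$ and Boolean function $\alpha$, and for all $s \geq s_0$ set $b_s(z) = \alpha(a_s(x_1), \ldots, a_s(x_{k'}))$. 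By construction $\lim_s b_s(z) = A_{bk}(z)$, and the whole process is uniformly computable in $z$.

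Now count the changes in $s \mapsto b_s(z)$. Before $s_0$ the value is identically $0$; at stage $s_0$ the value may flip once (from $0$ to the initial evaluation of $\alpha$); and for $s > s_0$ the value can only change when some $a_s(x_i)$ changes, giving at most $k' \cdot n \leq kn$ further changes. Thus $s \mapsto b_s(z)$ changes at most $kn + 1$ times, so $A_{bk}$ itself is $(nk+1)$-c.e., uniformly in $z$. Finally, if $f$ is a computable injection witnessing $B \leq_1 A_{bk}$, then $s \mapsto b_s(f(y))$ is a $(nk+1)$-c.e.\ approximation of $B$, so $B$ is $(nk+1)$-c.e. The argument is pure bookkeeping with no serious obstacle; the ``$+1$'' accounts for the single transition at stage $s_0$, while the $nk$ term is the combined budget from the at most $k$ queried bits each changing at most $n$ times.
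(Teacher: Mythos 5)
The paper does not prove this theorem; it is quoted as a result of Gerla \cite{Gerla} with no argument given, so I can only assess your proof directly. Your argument is the natural direct one and is correct. The one place to be explicit is the assertion $k' \leq k$: as printed, the definition of $A_{bk}$ requires the numerical code $\varphi_x(x)$ to be $\leq k$, but what the argument actually needs (and what makes the statement nontrivial and makes $A_{bk}$ a genuine jump on the $btt(k)$-degrees, rather than something determined by a fixed finite segment of $A$ together with $\es^\prime$) is a bound of $k$ on the \emph{norm} of the tt-condition, i.e.\ the number of oracle positions queried. Under any standard coding of tt-conditions a bound on the code a fortiori bounds the norm, so your step is justified, but it deserves a sentence rather than being passed over. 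Granting that, your mind-change count is exactly right: start at $0$; one possible flip at the stage where $\varphi_z(z)$ is first seen to converge to an admissible tt-condition; and thereafter each of the $\leq k$ queried positions of the $n$-c.e.\ approximation to $A$ can change at most $n$ more times, giving at most $nk$ further flips, for a total of $nk+1$. Precomposing with the computable injection witnessing $B \leq_1 A_{bk}$ preserves the mind-change bound, so $B$ is $(nk+1)$-c.e.\ as claimed.
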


Let $\es_{n(bk)}$ denote the $n$-th iteration of the $bk$ jump of the empty set.  It follows from the theorem that if $A \leq_1 \es_{n(bk)}$ then $A$ is $(1 + k + k^2 + \ldots k^{n-1})$-c.e.\ \cite{Gerla}.

Since Gerla's truth-table jump is designed for a stronger reducibility, we expect it to be weaker than the bounded jump.  We prove that for every set $A$ we have $A_{tt} \leq_1 A^b$, but there are many sets $X$ such that $X^b \not\leq_{bT} X_{tt}$.

\begin{prop} $A_{tt} \leq_1 A^{b_0}$ \end{prop}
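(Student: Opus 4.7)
The plan is to produce a computable injection $f$ with $x \in A_{tt} \iff f(x) \in A^{b_0}$. Fix a computable coding of $tt$-conditions, and given a code $c$ write $c = \langle x_1, \ldots, x_k, \alpha\rangle$ with $\alpha : 2^k \to 2$; let $N(c) = \max(x_1, \ldots, x_k) + 1$ be the part of the oracle needed to evaluate the condition. Recall $x \in A_{tt}$ means $\varphi_x(x) \conv$ to some $c$ and $\alpha(A(x_1), \ldots, A(x_k)) = 1$. The two things we need to see in $A^{b_0}$ are: (i) a bound on the oracle use, coming from a total-when-defined computable function; and (ii) a functional that evaluates the truth table against that initial segment.

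Concretely, using $s$-$m$-$n$ I would define two computable functions $i(x)$ and $e(x)$ as follows. First, $\varphi_{i(x)}(j)$ computes $c := \varphi_x(x)$, and when (if) this converges, outputs $N(c)$; so $\varphi_{i(x)}(j) \conv$ iff $\varphi_x(x) \conv$, and in that case the value is exactly the use needed to read off $A(x_1), \ldots, A(x_k)$. Second, $\Phi_{e(x)}^C(j)$ computes $c := \varphi_x(x)$, decodes $(x_1, \ldots, x_k, \alpha)$, and then halts (say outputting $0$) if $\alpha(C(x_1), \ldots, C(x_k)) = 1$ and diverges otherwise.

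Set $f(x) = \langle e(x), i(x), 0 \rangle$. Chasing definitions, $f(x) \in A^{b_0}$ iff $\varphi_{i(x)}(0) \conv$ and $\Phi_{e(x)}^{A \smrstrd \varphi_{i(x)}(0)}(0)\conv$. By the construction, the first conjunct is equivalent to $\varphi_x(x)\conv$; granted that, the second conjunct is equivalent to $\alpha(A(x_1), \ldots, A(x_k)) = 1$, because $\varphi_{i(x)}(0) = N(c)$ is large enough to read off $A$ at all the relevant positions. Thus $f(x) \in A^{b_0} \iff x \in A_{tt}$.

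The only thing to double-check is the $1$-reduction requirement that $f$ be injective. Both $i$ and $e$ are obtained via $s$-$m$-$n$ with $x$ as a parameter, so by the padding lemma we may take $e(x)$ (say) to be a strictly increasing function of $x$; since $\langle \cdot, \cdot, \cdot \rangle$ is itself injective, $f$ inherits injectivity. There is no real obstacle here — the entire argument is a direct application of $s$-$m$-$n$ plus the fact that a $tt$-condition only probes a computably bounded initial segment of the oracle, which is exactly what $A^{b_0}$ is built to capture.
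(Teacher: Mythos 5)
Your proof is correct and follows essentially the same approach as the paper: both use $s$-$m$-$n$ to produce a triple $\langle e(x), i(x), z\rangle$ where $\varphi_{i(x)}$ gives the oracle bound (converging iff $\varphi_x(x)$ does) and $\Phi_{e(x)}^C$ evaluates the $tt$-condition against $C$. The only cosmetic difference is that you unpack the $tt$-condition directly and compute the bound $N(c)$ explicitly, whereas the paper packages the same information as witnesses $f,\Phi_k$ for the reduction $A^{tt}\leq_{bT}A$.
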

\begin{proof}
Let $f$ and $\Phi_k$ witness that $A^{tt} \leq_{bT} A$.  We define computable, injective functions $h$ and $j$.  Let $\varphi_{h(e)}(z) = f(\varphi_e(e))$.

\ \\
Define $j$ by $\Phi^C_{j(e)} (z) \conv$ iff $\varphi_e (e) \conv$ and $\varphi_e (e) \in \Phi_k^C$.

\ \\
Let $z$ represent an arbitrary dummy variable.  We note the following.

\begin{align*}
x \in A_{tt}
\iff \ & \varphi_x (x) \conv \in A^{tt} \\
 \iff \ &
\varphi_x(x) \conv \in \Phi_k^A \\
\iff \ & \varphi_x(x) \conv \in
\Phi_k^{A \smrstrd f(\varphi_x (x))} \\
\iff \ & \varphi_x(x) \conv \in \Phi_k^{A \smrstrd \varphi_{h(x)}(z)} \\
\iff \ & \Phi_{j(x)}^{A \smrstrd \varphi_{h(x)}(z)}(z) \conv \\
\iff \ &
\langle j(x), h(x), z \rangle \in A^{b_0}
\end{align*}

Thus $A_{tt} \leq_1 A^{b_0}$.

\end{proof}

\begin{cor} $A_{tt} \leq_1 A^b$ \end{cor}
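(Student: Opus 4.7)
The plan is to obtain the corollary as an immediate chaining of two results already established in the paper. The preceding proposition provides a computable injection $f_1$ witnessing $A_{tt} \leq_1 A^{b_0}$, and the theorem near the start of Section 3 provides a computable injection $f_2$ witnessing $A^{b_0} \leq_1 A^b$. Since $1$-reducibility is transitive (the composition of two computable injective functions is computable and injective), the composition $f_2 \circ f_1$ is a computable injection witnessing $A_{tt} \leq_1 A^b$.

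More explicitly, I would just remark that for any $x$ we have $x \in A_{tt}$ iff $f_1(x) \in A^{b_0}$ iff $f_2(f_1(x)) \in A^b$, and that $f_2 \circ f_1$ is computable and injective since both $f_1$ and $f_2$ are. This is the cleanest way to present the argument, and it avoids repeating any machinery from either earlier proof.

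There is essentially no obstacle here: the corollary is a one-line consequence of the two previously established reductions together with the transitivity of $\leq_1$. The only thing to be a bit careful about is to cite the right earlier results (the proposition $A_{tt} \leq_1 A^{b_0}$ and the theorem $A^{b_0} \leq_1 A^b$) rather than re-doing any coding with $s$-$m$-$n$ or the padding lemma.
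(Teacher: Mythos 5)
Your proposal is correct and is exactly the (implicit) argument in the paper: the corollary follows from the preceding proposition $A_{tt} \leq_1 A^{b_0}$ together with the earlier theorem $A^{b_0} \leq_1 A^b$ and transitivity of $\leq_1$.
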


\begin{theorem} There is a c.e.\ set $A$ such that $A^b \not\leq_{bT} A_{tt}$. \end{theorem}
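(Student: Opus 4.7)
The plan is to construct the c.e.\ set $A$ by a finite-injury priority argument meeting, for each pair $(e,g) \in \omega^2$, the requirement
\[
R_{e,g}\colon\ (\exists x)\,\Phi_e^{A_{tt}\smrstrd \varphi_g(x)}(x) \neq A^b(x),
\]
listed in priority order as $\{R_k\}_{k\num}$.  For a single $R_{e,g}$ I would run a wait-and-switch diagonalization: reserve, via the Recursion Theorem with padding, a large witness $x$ together with an auxiliary index $i \leq x$ whose function $\varphi_i$ and whose functional $\Phi_x$ may be specified during the construction.  Wait for a stage at which $\Phi_e^{A_{tt}\smrstrd \varphi_g(x)}(x) \conv$ to some $v \in \{0,1\}$; then pick a fresh target $n$, declare $\varphi_i(x) = n+1$, and declare $\Phi_x^\sigma(x) \conv = 0$ precisely when $\sigma(n) = 1$.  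If $v = 0$, enumerate $n$ into $A$, so $x \in A^b$ via the witness $i$; if $v = 1$, never enumerate $n$, so $\Phi_x^{A\smrstrd m}(x) \dive$ for every $m$ and hence $x \nin A^b$.

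The delicate point is ensuring that the enumeration does not disturb $A_{tt}\smrstrd \varphi_g(x)$.  For $y \leq \varphi_g(x)$, the value $A_{tt}(y)$ depends on $A$ only at the positions listed in the tt-condition coded by $\varphi_y(y)$ (when $\varphi_y(y)$ converges).  Hence it suffices to pick $n$ outside the finite c.e.\ set
\[
Z(x) = \bigcup_{y \leq \varphi_g(x),\ \varphi_y(y)\conv}\{\text{positions listed in the tt-condition }\varphi_y(y)\}.
\]
Choosing $n$ fresh, larger than every number revealed so far, places $n$ outside the current approximation of $Z(x)$.  If a later stage reveals some new $\varphi_y(y)\conv$ whose tt-condition lists $n$, the requirement is \emph{injured}: its parameters are cancelled and it restarts with a new padded witness and a new $n$.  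Since only $\varphi_g(x) + 1$ distinct $y$'s are ever relevant, this source of injury is finite.

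Interference between requirements is handled by priority.  Because targets are always picked fresh, a higher-priority $R_{k'}$ can injure $R_k$ only when its enumerated $n_{k'}$ lands in $Z(x_k)$; finiteness of $Z(x_k)$ bounds these injuries as well.  A routine induction along the priority list then shows each $R_k$ is injured only finitely often and eventually stabilizes with parameters $(x_k, i_k, n_k)$ satisfying the diagonalization.  The main obstacle is exactly this insulation of the enumeration from the truth-table queries: $Z(x)$ is only c.e., so no choice of $n$ can be guaranteed safe in advance.  The finite-injury scheme succeeds precisely because the relevant queries $y \leq \varphi_g(x)$ form a finite set, which bounds how many times we must re-choose $n$.
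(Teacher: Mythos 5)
Your wait-and-switch scheme captures the right combinatorial obstacle (the tt-conditions coded by $\varphi_y(y)$ for $y\leq\varphi_g(x)$ form a finite but unboundedly-located set, so a fresh target $n$ can be invalidated only by a later $\varphi_y(y)$-convergence), but the injury accounting has a genuine gap. Once you set $\varphi_i(x)=n+1$ and fix the behavior of $\Phi_x$, those declarations cannot be retracted; as you acknowledge, an injury forces you to abandon $x$ and move to a \emph{new} padded witness $x'$. But then the bound you invoke, ``at most $\varphi_g(x)+1$ relevant $y$'s,'' is a moving target: the new witness typically has $\varphi_g(x')>\varphi_g(x)$, enlarging the pool of $y$'s that can still converge and cause a further injury. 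If $\varphi_g$ is unbounded (e.g.\ $\varphi_g(x)=x$), nothing in the argument prevents an infinite sequence of witnesses $x^{(1)}<x^{(2)}<\cdots$, each killed by some fresh $\varphi_{y_j}(y_j)\conv$ with $y_j\leq\varphi_g(x^{(j)})$ and $n^{(j)}$ in its tt-condition: the $y_j$'s are all distinct, so there is no contradiction with $\es'$ being a fixed set, and the requirement never stabilizes. Reserving $\varphi_g(x)+1$ many indices $i\leq x$ in advance does not help either, since $\varphi_g(x)$ may exceed $x$.

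The paper's construction fixes exactly this by replacing wait-and-switch with a \emph{flip-flop} on a single frozen witness. Once the marker $x_n$ stabilizes, the use bound $\varphi_{x_n}(x_n)$ is set \emph{once}, with padding room $\varphi_n(x_n)+\max A_s+r(n)$. Thereafter the construction alternately declares $\Phi_{x_n}^{A_s\Eres\varphi_{x_n}(x_n)}(x_n)\conv$ (putting $x_n\in A^b$) and enumerates a fresh number below $\varphi_{x_n}(x_n)$ into $A$ to invalidate that declared oracle (putting $x_n\nin A^b$), never changing $x_n$ or $\varphi_{x_n}(x_n)$. Crucially, because the restraint $r$ freezes $A$ below $l(\varphi_n(x_n),\cdot)$, each time the opponent's computation $\Phi_n^{A_{tt}\Eres\varphi_n(x_n)}(x_n)$ re-synchronizes with the toggled $A^b(x_n)$, some new $\varphi_y(y)$ with $y\leq\varphi_n(x_n)$ must have converged---so the number of flips is bounded by $\varphi_n(x_n)$, a quantity fixed once and for all when the marker settles, and the padding room suffices. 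Your argument lacks this self-stabilizing accounting: it never ties the number of retries to a \emph{fixed} finite quantity. To repair the proposal you would need either the paper's flip-flop mechanism or some other device that lets a single witness absorb all the injuries.
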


\begin{proof} First note that if we have a computable approximation to a set $A$, then this
induces an obvious approximation for $A_{tt}$. Namely, if
$\varphi_{x,s}(x) \up$ then $x \nin A_{tt} [s]$, and if
$\varphi_{x,s}(x) \dn$ then $x \in A_{tt} [s] \iff \varphi_{x}
(x) \in A[s]^{tt}$.  We also have an approximation for $A^b$ by
$x \in A^b[s] \iff \exists i \leq x[\varphi_{i,s}(x) \conv\ \wedge\
\Phi_{x}^{A\smrstrd \varphi_{i,s} (x)} (x) \conv[s]]$. We note
that if $A$ is c.e.\ then these are both $\Delta^0_2$
approximations.

For $k \in \omega$, let $l(k,s) = \max\{\varphi_{x,s}(x) \mid x
\leq k \ \wedge\ \varphi_{x,s} (x) \conv \}$. Note that for $s <t$, if $A\Eres l(k,s) [s] =
A\Eres l(k,s) [t]$, then $A_{tt} \Eres k [s] = A_{tt} \Eres k
[t]$ \emph{unless} $\varphi_{x,t}(x) \dn$ for some $x \leq k$
such that $\varphi_{x,s}(x) \up$.

We now proceed with the construction of $A$. We must meet for all $n \in \omega$ the requirement
$$R_n: (\neg \forall x) [\varphi_{\pi_2(n)}(x) \conv \ \wedge\ \Phi_{\pi_1(n)}^{A_{tt} \Eres \varphi_{\pi_2(n)}(x)}(x) = A^b(x)]$$
where $\pi_1$ and $\pi_2$ are projection functions for some canonical pairing function.  

To ease notation, we will use the following convention.  We write $\Phi_i$ for $\Phi_{\pi_1(i)}$.  For $\varphi$ we distinguish between two cases.  We write $\varphi_i (x_i)$ for $\varphi_{\pi_2(i)} (x_i)$.  However $\varphi_y (y)$ maintains its usual meaning for any $y$.

Let $e_0 < e_1 <e_2 < \ldots$ be a computable list such that we
control $\varphi_{e_i}$ and $\Phi_{e_i}$ by the Recursion
Theorem.  A formal definition can be accomplished by the methods 
used in lemma \ref{ghlem}.  

We will use a set of movable markers $x_i$ for $i
\num$ such that for all $i$ we have $x_i = e_j$ for some $j$.
We will also make use of a restraint function $r$.  We start with
$r(n)[0] = 0$ for all $n$.

\emph{Stage 0:} Let $x_0 = e_0$.

\emph{Stage s+1:} For each $m$ let $r(m)[s+1] = \max
\{\varphi_{x,s+1}(x) \mid x \leq \varphi_l(x_l) [s]$ for
some $l <m \}$ (we say $r(m)[s+1]=0$ if this set is empty).
Let $k$ be least such that $r(k) [s+1]
> r(k) [s]$ (if no such $k$ exists, use $k=s$).
Undefine all $x_m$ with $m \geq k$.

\emph{Case 1:} There is no $n < k$ such that $x_n$ is defined and $\Phi_n^{A_{tt} \Eres \varphi_n(x_n)}(x_n) [s] \dn =
A^b (x_n) [s]$.

We then let $m$ be least such that $x_m$ is not defined, and define $x_m$ to be the least $e_i$ that has
not been used in the construction (proceed to the next stage).

\emph{Case 2:} Else.

We then let $n < k$ be least such that $x_n$ is defined and $\Phi_n^{A_{tt} \Eres \varphi_n(x_n)}(x_n) [s] \dn =
A^b (x_n) [s]$. Undefine all $x_m$ with $m >n$.  If it has not yet been defined (with the current value of $x_n$),
let $\varphi_{x_n}(x_n) =r(n)[s+1] + \max\{A_s\} + \varphi_n(x_n)$.

\emph{Subcase 2A:} $A^b (x_n) [s] = 0$.

Set $\Phi_{x_n}^{A_s \Eres \varphi_{x_n}(x_n)}(x_n)\dn$, so that $A^b(x_n) [s+1] =1$.

\emph{Subcase 2B:} $A^b(x_n) [s] = 1$.

Choose the least $x >r(n)$ such that $x \nin A [s]$, and enumerate $x \in A [s+1]$. We demonstrate later
that we have $x \leq \varphi_{x_n}(x_n)$, so that this will cause $A^b(x_n) [s+1] = 0$.

This completes the construction of $A$.

It is easy to see that the set constructed is c.e.  We claim
that for each $n$, $x_n = \lim_s x_n [s]$ exists, and provides
a witness for $R_n$.  We say that a requirement $R_n$ receives
attention if we perform case 2 of the construction for some $x_n$.

\begin{lemma}
For each $n$, $x_n = \lim_s x_n [s]$ exists, and provides a
witness for $R_n$. Moreover, the requirement $R_n$ receives
attention at most finitely often.
\end{lemma}
\begin{proof} Since $x_0$ is never undefined, it reaches its limit at stage
$0$. Assume that $x_{l}$ with $l < m$ have reached their limit,
and if $\varphi_{l} (x_{l}) \conv$ then it has already done so.
Then the value $r(m)$ can increase at most $\max \{\varphi_{l}
(x_{l}) \mid l < m \}$-many more times, and so there must be a
stage after which $x_m$ is never undefined.

Assume for a contradiction that $x_n$ is least such that
$\Phi_n^{A_{tt} \Eres \varphi_n(x_n)}(x_n) \dn = A^b (x_n)$.
Let $s$ be the least stage after which no $x_m$ with $m <n$
requires attention, and $r(n)$ has reached its limit. So at
stage $s+1$ of the construction, $x_n$ is defined, and is never
again undefined. Since $\Phi_n^{A_{tt} \Eres
\varphi_n(x_n)}(x_n) \dn = A^b (x_n)$, there is a least stage
$s_0 > s+1$ such that $\Phi_n^{A_{tt} \Eres
\varphi_n(x_n)}(x_n) [s_0] \dn = A^b (x_n)[s_0]$. Since $s_0$ is the
first stage where $R_n$ requires attention with this value of
$x_n$, we define $\varphi_{x_n}(x_n) = r(n) + \max\{A_{s_0-1}\} +
\varphi_n(x_n)$ at stage $s_0$, and we have $A^b(x_n) [s_0 -
1] = 0$. So at stage $s_0$ of the construction, we set
$\Phi_{x_n}^{A_{s_0} \Eres \varphi_{x_n}(x_n)}(x_n)\dn$, so
that $A^b(x_n) [s_0] = 1$. Note that at stage $s_0$ there are at
least $\varphi_n(x_n)$-many numbers greater than $r(n)$ and
less than $\varphi_{x_n}(x_n)$ available to enumerate into $A$.

Let $s_0 < s_1 <s_2 < \ldots$ be all the further stages of the
construction where $R_n$ receives attention.  We will show that
for all even $k$ an element is enumerated into
$\es^\prime \rstrd \varphi_n (x_n)$ at some stage $t$ with
$s_k < t \leq s_{k+1}$.  It follows that the sequence $s_0, s_1, \ldots$
must be finite, contradicting the assumption that
$\Phi_n^{A_{tt} \Eres \varphi_n(x_n)}(x_n) \dn = A^b (x_n)$.  We
will also show inductively that there is sufficient room to
enumerate elements into $A$ between $r(n)$ and $\varphi_{x_n} (x_n)$,
as claimed earlier.

Let $k$ be even, and assume for our induction that there are at least
$(\varphi_n(x_n) - \frac{k}{2})$-many numbers greater than
$r(n)$ and less than $\varphi_{x_n}(x_n)$ available to
enumerate into $A$.  Without loss of generality, suppose that at stage
$s_k$ we ensured $A^b(x_n) [s_k] = 1$.  Since all requirements $R_m$ with
$m<n$ have stopped acting, no requirement $R_m$ with $m \leq n$
enumerated into $A$ at any stage $s_k \leq t \leq s_{k+1}$.
Furthermore, since $r(m)\geq \varphi_{x_n}(x_n)$ for all $m
>n$, no requirement $R_m$ enumerates into
$A\Eres \varphi_{x_n}(x_n)$ at any stage $s_k \leq t <s_{k+1}$.
Hence $A^b(x_n) [s_{k+1} -1] = 1$ and $\Phi_n^{A_{tt} \Eres
\varphi_n(x_n)}(x_n) [s_{k+1}-1] = 1$. So $A_{tt} \Eres
\varphi_n(x_n)[s_k -1] \not= A_{tt} \Eres \varphi_n(x_n)[s_{k+1}-1]$.

Using our observation from the start of the proof of the theorem, to
demonstrate that there is a $y \leq \varphi_n(x_n)$ such that
$\varphi_{y, s_k}(y) \up$ but $\varphi_{y, s_{k+1}}(y) \dn$, it suffices
to show $A \rstrd l(\varphi_n(x_n),s_k - 1)[s_k - 1] = A \rstrd
l(\varphi_n(x_n,s_{k+1}-1)[s_{k+1}-1]$.  Between stages $s_k - 1$ and
$s_{k+1} - 1$, the construction only runs subcase 2 for a requirement
$R_m$ with $m > n$.  Hence no element is enumerated into $A \rstrd r(n+1)$.
Since $r(n+1) \geq l(\varphi_n(x_n),s_k - 1)$ we have
$A \rstrd l(\varphi_n(x_n),s_k - 1)[s_k - 1] = A \rstrd
l(\varphi_n(x_n,s_{k+1}-1)[s_{k+1}-1]$, as desired.  Thus some $y \leq \varphi_n (x_n)$
was added to $\es^\prime$ between stages $s_k$ and $s_{k+1}$.

At stage $s_{k+1}$ the least $x > r(n)$ such that $x \nin A [s_{k+1}-1]$, was
enumerated into $A [s_{k+1}]$.  By induction hypothesis, we had
$x \leq \varphi_{x_n}(x_n)$, so that $A^b(x_n)[s_{k+1}]= 0$. Note that
at stage $s_{k+1} +1$ there are at least $(\varphi_n(x_n) - \frac{k}{2} -1)$-many
numbers greater than $r(n)$ and less than $\varphi_{x_n}(x_n)$
available to enumerate into $A$.

At stage $s_{k+2}$, we acted because $\Phi_n^{A_{tt} \Eres
\varphi_n(x_n)}(x_n) [s_{k+2} -1] \dn = A^b (x_n) [s_{k+2} -1] = 0$. We set
$\Phi_{x_n}^{A_{s_{k+2}} \Eres \varphi_{x_n}(x_n)}(x_n)\dn$, so
that $A^b(x_n) [s_{k+2}] = 1$. There was no enumeration into $A$
below $r(n+1)$ at any stage $s_{k+1} <t \leq s_{k+2}$, so that at
stage $s_{k+2} +1$ there are at least $(\varphi_n(x_n) -
\frac{k}{2} -1)$-many numbers greater than $r(n)$ and less than
$\varphi_{x_n}(x_n)$ available to enumerate into $A$.  This
completes our induction.

Since we can only reach a stage $s_k$ with $k$ even if a number
less than $\varphi_n(x_n)$ enters $\es^\prime$, and since we
have left room to enumerate into $A$ in the desired interval at
each such stage, it follows that there can be only finitely
many stages $s_k$, as desired.
\end{proof}
\end{proof}

A similar proof can be used to show that every 2-generic $A$ is such that $A^b \not\leq_{bT} A_{tt}$.

Finally, we note that the minijump operator developed by Ershov \cite{minijump} works on the $pm$ degrees in a manner similar to $A_{tt}$ on the truth-table degrees (See Odifreddi \cite{OdifreddiCRT}, Volume II page 732).

\section{Further Study}

There is considerable room left to explore in the study of the bounded jump.  We can examine to what degree do important results for the Turing jump on the Turing degrees also hold for the bounded jump on the bounded Turing degrees, particularly in cases where these results do not hold for the Turing jump on the bounded Turing degrees.

For example, Sacks \cite{SacksJI} proved that for every $\Sigma_2$ set $X \geq_T \es^\prime$ there is a c.e.\ set $Y$ such that $Y^\prime \equiv_T X$.  Csima, Downey, and Ng \cite{CsimaDowneyNg} proved that Sacks jump inversion does not hold for the Turing jump on the bounded Turing degrees.  We do not know if Theorem \ref{SchThm} holds if we add the requirement that $Y$ is c.e.

We can also look at concepts related to the Turing jump.  We say that a set $X$ is bounded-high if $X^b \geq_{bT} \es^{2b}$ and bounded-low if $X^b \leq_{bT} \es^b$.  We can then attempt to characterize which sets are bounded-high and bounded-low.  Finally, the jumps for the truth-table and bounded truth-table degrees developed by Gerla \cite{Gerla} could be considered in more detail.

\nocite{*}
\bibliography{BoundedJump}

\begin{thebibliography}{10}

\bibitem{andtt}
B.~A. Anderson.
\newblock Automorphisms of the truth-table degrees are fixed on a cone.
\newblock {\em J. Symbolic Logic}, 74(2):679--688, 2009.

\bibitem{AshKnight}
C.~J. Ash and J.~Knight.
\newblock {\em Computable Structures and the Hyperarithmetical Hierarchy}.
\newblock Elsevier, 2000.

\bibitem{CDL}
R.~Coles, R.~Downey, and G.~Laforte, May 2010.
\newblock Personal communication.

\bibitem{CooperBook}
B.~Cooper.
\newblock {\em Computability Theory}.
\newblock Chapman and Hall/CRC, 2004.

\bibitem{CsimaDowneyNg}
B.~Csima, R.~Downey, and K.~M. Ng.
\newblock Limits on jump inversion for strong reducibilities.
\newblock Preprint.

\bibitem{minijump}
Y.~L. Ershov.
\newblock A hierarchy of sets {III}.
\newblock {\em Algebra and Logic}, 9:20--31, 1970.

\bibitem{Gerla}
G.~Gerla.
\newblock Una generalizzazione della gerarchia di {E}rshov.
\newblock {\em Bolletino U.M.I.}, 16-B(5):765--778, 1979.

\bibitem{REA}
C.~G. Jockusch, Jr. and R.~A. Shore.
\newblock Pseudo-jump operators {II}: Transfinite iterations, hierarchies, and
  minimal covers.
\newblock {\em J. Symbolic Logic}, 49:1205--1236, 1984.

\bibitem{Mohrherr}
J.~Mohrherr.
\newblock Density of a final segment of the truth-table degrees.
\newblock {\em Pacific J. Math.}, 115(2):409--419, 1984.

\bibitem{Nies}
A.~Nies.
\newblock {\em Computability and Randomness}.
\newblock Clarendon Press, 2007.

\bibitem{OdifreddiCRT}
P.~G. Odifreddi.
\newblock {\em Classical Recursion Theory}.
\newblock Elsevier, 1999.

\bibitem{Rogers}
H.~Rogers.
\newblock {\em Theory of Recursive Functions and Effective Computability}.
\newblock McGraw-Hill, 1967.

\bibitem{SacksJI}
G.~E. Sacks.
\newblock Recursive enumerability and the jump operator.
\newblock {\em Trans. American Math. Society}, 108(2):223--239, 1963.

\bibitem{ShoenfieldInv}
J.~Shoenfield.
\newblock On degrees of unsolvability.
\newblock {\em Ann. Math.}, 69:644--653, 1959.

\bibitem{Soare}
R.~I. Soare.
\newblock {\em Recursively Enumerable Sets and Degrees}.
\newblock Springer-Verlag, 1987.

\bibitem{NewSoare}
R.~I. Soare.
\newblock {\em Computability Theory and Applications}.
\newblock Springer-Verlag, preprint.

\end{thebibliography}
\end{document}